\renewcommand{\d}{\mathrm{d}}
\newcommand{\ddr}{\mathrm{d}_{\mathrm{dR}}}
\newcommand{\D}{\mathrm{D}}
\newcommand{\T}{\mathrm{T}}
\newcommand{\U}{\mathrm{U}}
\newcommand{\Z}{\mathrm{Z}}
\newcommand{\bE}{\mathbb{E}}
\newcommand{\bL}{\mathbb{L}}
\newcommand{\bP}{\mathbb{P}}
\newcommand{\coP}{\mathrm{co}\mathbb{P}}
\newcommand{\bT}{\mathbb{T}}
\newcommand{\g}{\mathfrak{g}}
\newcommand{\Alg}{\mathrm{Alg}}
\newcommand{\Ass}{\mathrm{Ass}}
\newcommand{\coAss}{\mathrm{coAss}}
\newcommand{\BD}{\mathbb{BD}}
\newcommand{\Br}{\mathrm{Br}}
\newcommand{\coBD}{\mathrm{co}\mathbb{BD}}
\newcommand{\C}{\mathrm{C}}
\newcommand{\cop}{\mathrm{cop}}
\newcommand{\CC}{\mathrm{CC}}
\newcommand{\Der}{\mathrm{Der}}
\newcommand{\Hom}{\mathrm{Hom}}
\newcommand{\id}{\mathrm{id}}
\newcommand{\op}{\mathrm{op}}
\newcommand{\Pol}{\mathrm{Pol}}
\newcommand{\sgn}{\mathrm{sgn}}
\DeclareMathOperator{\pt}{pt}
\DeclareMathOperator{\eq}{eq}
\DeclareMathOperator{\Spec}{Spec}
\DeclareMathOperator{\Sym}{Sym}
\newcommand{\llpar}{(\!(}
\newcommand{\rrpar}{)\!)}
\newtheorem*{theorem}{Theorem}
\newtheorem{thm}{Theorem}[section]
\newtheorem{prop}[thm]{Proposition}
\newtheorem{cor}[thm]{Corollary}
\theoremstyle{definition}
\newtheorem{defn}[thm]{Definition}
\theoremstyle{remark}
\newtheorem{remark}[thm]{Remark}
\newtheorem{example}[thm]{Example}
\title{Poisson reduction as a coisotropic intersection}
\keywords{Poisson reduction, shifted Poisson structure, BRST complex, brace algebra}
\author{Pavel Safronov}
\address{Mathematical Institute, Radcliffe Observatory Quarter, Woodstock Road, Oxford UK, OX2 6GG}
\curraddr{Institut f\"{u}r Mathematik, Universit\"{a}t Z\"{u}rich, Winterthurerstrasse 190, 8057 Z\"{u}rich, Switzerland}
\email{pavel.safronov@math.uzh.ch}
\begin{document}

\maketitle
\begin{abstract}
We give a definition of coisotropic morphisms of shifted Poisson (i.e. $\bP_n$) algebras which is a derived version of the classical notion of coisotropic submanifolds. Using this we prove that an intersection of coisotropic morphisms of shifted Poisson algebras carries a Poisson structure of shift one less. Using an interpretation of Hamiltonian spaces as coisotropic morphisms we show that the classical BRST complex computing derived Poisson reduction coincides with the complex computing coisotropic intersection. Moreover, this picture admits a quantum version using brace algebras and their modules: the quantum BRST complex is quasi-isomorphic to the complex computing tensor product of brace modules.
\end{abstract}

\section*{Introduction}

The goal of the present paper is to introduce the notion of a coisotropic structure on shifted Poisson algebras on the level of 1-categories and show that it satisfies some expected properties such as:
\begin{itemize}
\item Moment maps provide examples of coisotropic structures,

\item A derived intersection $B_1\otimes^\bL_A B_2$ of coisotropic maps $A\rightarrow B_1$ and $A\rightarrow B_2$, where $A$ is an $n$-shifted Poisson algebra, carries an $(n-1)$-shifted Poisson structure up to homotopy.
\end{itemize}

The homotopy theory of such coisotropic structures is further studied in \cite{MS16} and \cite{MS17}.

\subsection*{Coisotropic intersections}

Motivated by Lagrangian Floer theory and Donaldson--Thomas theory, Behrend and Fantechi \cite{BF} showed that the cohomology of the algebra of functions on a derived intersection of two holomorphic Lagrangian submanifolds of a complex symplectic manifold carries a $(-1)$-shifted Poisson ($\bP_0$) structure.

Pantev, To\"{e}n, Vaqui\'{e} and Vezzosi \cite{PTVV} gave a derived-geometric interpretation of this result. Namely, it was shown that a derived intersection of two algebraic Lagrangians carries a $(-1)$-shifted symplectic structure. More generally, they have shown that a derived intersection of two Lagrangians in an $n$-shifted symplectic stack is $(n-1)$-shifted symplectic.

Baranovsky and Ginzburg \cite{BG} generalized the Behrend--Fantechi result in a different direction. Namely, they have shown that the cohomology of the algebra of functions on the derived intersection of two coisotropic subvarieties of a Poisson variety carries a $\bP_0$-structure. It is thus natural to ask whether one can lift the Baranovsky--Ginzburg construction to the chain level.

Calaque, Pantev, To\"{e}n, Vaqui\'{e} and Vezzosi \cite{CPTVV} introduced $n$-shifted Poisson structures on derived stacks and derived coisotropic structures on morphisms of stacks. Let us recall their definitions in the affine setting. Let $A$ be a commutative dg algebra. By a theorem of Melani \cite{Mel}, an $n$-shifted Poisson structure on $A$ is the same as a $\bP_{n+1}$-structure on $A$, i.e. a Poisson bracket of cohomological degree $-n$. For $B$ another commutative dg algebra, CPTVV define a coisotropic structure on a morphism $A\rightarrow B$ to be the same as a $\bP_n$-structure on $B$ together with the data of an associative action of $A$ on $B$ in the category of $\bP_n$-algebras. To define such a notion, they use a result announced by Rozenblyum (Poisson additivity) which identifies $\bP_{n+1}$-algebras with associative algebra objects in the $\infty$-category of $\bP_n$-algebras. This definition is expected to give rather easily a $\bP_n$-structure on a coisotropic intersection. However, Poisson additivity is not given by explicit formulas, so the explicit Poisson structure on the coisotropic intersection would be difficult to write down.

In this paper we develop coisotropic structures in the affine setting, i.e. for arbitrary commutative differential graded algebras. We model an action of the $\bP_{n+1}$-algebra $A$ on a $\bP_n$-algebra $B$ by a $\bP_{n+1}$-morphism $A\rightarrow \Z(B)$ (Definition \ref{defn:coisotropic}). Here
\[\Z(B)=\Hom_B(\Sym_B(\Omega^1_B[n]), B)\]
is the complex of $(n-1)$-shifted polyvector fields with the differential twisted by the Poisson structure on $B$ which is a derived version of the Poisson center of $B$.

Note that the $\bP_{n+1}$-structure on $\Z(B)$ is very explicit: it is given by the Schouten bracket (i.e. by the commutator of multiderivations). Using this definition we prove the following theorem (Theorem \ref{thm:coisotropicintersection}).

\begin{theorem}
Let $A$ be a $\bP_{n+1}$-algebra and $A\rightarrow B_1,\ A\rightarrow B_2$ two coisotropic morphisms. Then the derived intersection $B_1\otimes^{\bL}_A B_2$ carries a homotopy $\bP_n$-structure. Moreover, the natural projection $B_1^{\op}\otimes B_2\rightarrow B_1\otimes^{\bL}_A B_2$ is a $\bP_n$-morphism where $B_1^{\op}$ denotes the same commutative dg algebra with the opposite Poisson bracket.
\end{theorem}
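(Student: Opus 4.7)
My plan is to produce an explicit chain-level model for $B_1\otimes^{\bL}_A B_2$ and to equip it with a homotopy $\bP_n$-bracket assembled from the two coisotropic structure maps $\sigma_i\colon A\to\Z(B_i)$. The natural model is the two-sided bar complex
\[\mathrm{Bar}(B_1,A,B_2)=\bigoplus_{k\geq 0} B_1\otimes A[1]^{\otimes k}\otimes B_2\]
with its standard bar differential built from the cdga multiplications $A\otimes A\to A$ and $A\otimes B_i\to B_i$; its $k=0$ stratum is $B_1\otimes B_2$. I would arrange for the claimed projection $B_1^{\op}\otimes B_2\to B_1\otimes_A^{\bL}B_2$ to factor through the inclusion of this stratum, so that the induced bracket on the $k=0$ piece must be the standard tensor-product bracket
\[\{b_1\otimes b_2,\,b_1'\otimes b_2'\}=-\{b_1,b_1'\}_{B_1}\otimes b_2 b_2'+(-1)^{|b_2||b_1'|}b_1 b_1'\otimes\{b_2,b_2'\}_{B_2}\]
of $B_1^{\op}\otimes B_2$, which is the structural reason for the ``op'' appearing on the $B_1$-factor.

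On higher strata, each internal $a\in A$ in a bar symbol is interpreted, via $\sigma_1$ or $\sigma_2$, as a polyvector field on $B_1$ or $B_2$, and brackets between bar symbols are defined by iterated Schouten contraction of these polyvectors against the outer $B_i$-factors of another symbol. Since the Schouten bracket of a polyvector field with a function lowers its symmetric degree by one, each contraction produces a finite sum of terms still in $\mathrm{Bar}(B_1,A,B_2)$, and the process terminates. The cleanest way to package the bracket and its higher homotopies is to view them as arising from a Maurer--Cartan element in a convolution $L_\infty$-algebra built from $\Z(B_1)\otimes\Z(B_2)$ (which is $\bP_{n+1}$ under the combined Schouten bracket) and the cofree coalgebra underlying the bar complex, twisted by the pair $\sigma_1\otimes\sigma_2$.

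The main obstacle will be verifying the two coherence conditions: compatibility of the bracket with the bar differential, and the $\bP_n$-Jacobi identity up to the required higher homotopies. The coisotropic hypothesis is essential at both points. Compatibility with the bar differential uses that each $\sigma_i$ is a cdga morphism, so that contraction by $\sigma_i(a_1 a_2)$ agrees, modulo Koszul signs, with successive contractions by $\sigma_i(a_1)$ and $\sigma_i(a_2)$. The Jacobi identity uses the full $\bP_{n+1}$-morphism property of $\sigma_i$, together with the key observation that $\sigma_1(A)$ and $\sigma_2(A)$ act on disjoint tensor factors of $B_1\otimes B_2$ and therefore Schouten-commute in $\Z(B_1)\otimes\Z(B_2)$, so that any cross-term obstruction vanishes identically. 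Once the bracket has been constructed with these properties, the claim that $B_1^{\op}\otimes B_2\to B_1\otimes^{\bL}_A B_2$ is a $\bP_n$-morphism becomes automatic: its strict part is the $k=0$ restriction built into the construction, and its higher components are supplied by the same Maurer--Cartan datum.
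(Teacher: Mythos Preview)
Your route is genuinely different from the paper's. The paper does not build the $L_\infty$-brackets on the two-sided bar complex directly or via a Maurer--Cartan twist in a convolution algebra; it goes through Koszul duality. The key lemma (Proposition~\ref{prop:PnKoszul}) is that for any $\bP_{n+1}$-algebra $A$ the bar coalgebra $\T_\bullet(A[1])$ is a $\bP_n$-\emph{bialgebra}, with bracket~\eqref{eq:PnKoszulbracket} built from $\{-,-\}_A$. Then, for a coisotropic $A\to M$, the one-sided bar complex $\T_\bullet(A[1])\otimes M$ is made into a left $\widehat{\bP}_n$-\emph{comodule} over $\T_\bullet(A[1])$, the higher $L_\infty$-operations being exactly the $f_k$. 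Finally the two-sided bar complex is identified with the cotensor product of the two one-sided complexes over $\T_\bullet(A[1])$; since a cotensor product is an equalizer of $\widehat{\bP}_n$-algebra maps, it inherits the $\widehat{\bP}_n$-structure for free. The payoff is that every coherence check (Jacobi, Leibniz, compatibility with the bar differential) can be performed after projecting to the cogenerators $A$ or $M$, where it reduces literally to one of the axioms~\eqref{eq:coisotropic1}--\eqref{eq:coisotropic5}.

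There is a gap in your sketch that the paper's formalism handles automatically. The bracket of two bar words is \emph{not} built solely from contracting $\sigma_i(a)$ against the outer $B_i$-factors: there is also an ``interior'' contribution pairing an internal $a_i$ from one word with an internal $a_j'$ from the other via the $\bP_{n+1}$-bracket $\{a_i,a_j'\}_A$ (this is precisely~\eqref{eq:PnKoszulbracket}). Your description, organised around $\Z(B_1)\otimes\Z(B_2)$ and endpoint contractions, does not produce these terms, and without them compatibility with the bar differential fails---the face map $[a_i|a_{i+1}]\mapsto[a_ia_{i+1}]$ interacts with the bracket through the Leibniz rule in $A$, not through anything visible in $\Z(B_i)$. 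In the paper these interior terms come from the $\bP_n$-bialgebra structure on $\T_\bullet(A[1])$ and are carried along by the cotensor product; in your framework you would need to add them by hand and then redo the coherence checks.
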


The proof of this theorem uses ideas from Koszul duality. Since one can identify by Poisson additivity a $\bP_{n+1}$-algebra with an associative algebra object in $\bP_n$-algebras, one expects the Koszul dual coalgebra of a $\bP_{n+1}$-algebra to carry a compatible $\bP_n$-structure; indeed, it is given by explicit formulas using the bar complex (Proposition \ref{prop:PnKoszul}). Similarly, we show that the Koszul dual to the $A$-module $B_i$ carries a homotopy $\bP_n$-structure given by the coisotropic structure. Finally, the derived tensor product $B_1\otimes^{\bL}_A B_2$ can be written as an underived cotensor product on the Koszul dual side.

After the present paper was posted on the ArXiv, the proof of Poisson additivity was written down in \cite{Saf2}. In the same paper it was shown that our definition of coisotropic morphisms is equivalent to the one of \cite{CPTVV}.

\subsection*{Moment maps}

We give an application of derived coisotropic intersection to Hamiltonian reduction.

Let us recall that given a symplectic manifold $X$ with a $G$-action preserving the symplectic form, a moment map is a $G$-equivariant morphism $\mu\colon X\rightarrow\g^*$ which is a Hamiltonian for the $G$-action. Hamiltonian reduction is defined to be the quotient
\[X//G=\mu^{-1}(0)/G.\]

If $0$ is a regular value for $\mu$ and the $G$-action on $\mu^{-1}(0)$ is free and proper, the quotient is a symplectic manifold as shown by Marsden and Weinstein \cite{MW}. If one of these conditions fails, the quotient is only a stratified symplectic manifold which hints that it is a shadow of a derived symplectic structure.

Indeed, passing to the setting of derived algebraic geometry we can rewrite
\[X//G\cong  \pt/G \times_{\g^*/G} X/G.\]

Moreover, as shown in \cite{Cal} and \cite{Saf}, Hamiltonian $G$-spaces are the same as Lagrangians in the $1$-shifted symplectic stack $\g^*/G$. Therefore, $X//G$ is a Lagrangian intersection and so carries a derived symplectic structure.

In this paper we show similar statements in the affine Poisson setting. Namely, if $B$ is a $\bP_1$-algebra (a dg Poisson algebra) with $\mu\colon \Sym\g\rightarrow B$ a moment map for a $\g$-action on $B$ we show that the induced morphism \[\C^\bullet(\g, \Sym\g)\rightarrow \C^\bullet(\g, B)\]
is coisotropic. Here $\C^\bullet(\g, -)$ is the Chevalley--Eilenberg cochain complex and  $\C^\bullet(\g, \Sym\g)$ is the $\bP_2$-algebra (i.e. Gerstenhaber algebra) of functions on the quotient $\g^*/G$ with $G$ formal.

The coisotropic intersection
\[\C^\bullet(\g, k)\otimes^{\bL}_{\C^\bullet(\g, \Sym\g)} \C^\bullet(\g, B)\]
is thus a derived Poisson reduction which we show to be quasi-isomorphic (as a commutative dg algebra) to the classical BRST complex as defined by Kostant and Sternberg \cite{KS}.

Let us note that this perspective on Poisson reduction is somewhat orthogonal to the one obtained by computing coisotropic reduction of $\mu^{-1}(0)\subset X$ using the BFV complex (see e.g. \cite{Sta}). Indeed, in that approach one considers a coisotropic \emph{reduction} of the 0-shifted coisotropic morphism $\mu^{-1}(0)\rightarrow X$. On the other hand, in our approach we consider a coisotropic \emph{intersection} of the 1-shifted coisotropic morphism $X/G\rightarrow \g^*/G$. The precise relationship between the two approaches is not clear to the author.

\subsection*{Quantization}

We also develop quantum versions of our results in the sense of deformation quantization. Namely, while deformation quantizations of $\bP_1$-algebras are dg algebras, deformation quantizations of $\bP_2$-algebras are $\bE_2$-algebras, i.e. algebras over the operad of little disks, which we model by brace algebras following \cite{MS}. We introduce a notion of a brace module $M$ over a brace algebra $A$ which provides deformation quantization of the notion of a coisotropic morphism from a $\bP_2$-algebra $A$ to a $\bP_1$-algebra $M$. One way to think of it is as follows: the pair (brace algebra, brace module) is conjectured to be the same as an algebra over the Swiss-cheese operad introduced by Voronov \cite{Vor}. We prove the following quantum version of the coisotropic intersection theorem (Theorem \ref{thm:quantumintersection}).

\begin{theorem}
Let $A$ be a brace algebra, $B_1$ a left brace module and $B_2$ a right brace module over $A$. Then the derived tensor product $B_1\otimes^{\bL}_A B_2$ carries a natural dg algebra structure such that the projection $B_1^{\op}\otimes B_2\rightarrow B_1\otimes^{\bL}_A B_2$ is an algebra morphism, where $B_1^{\op}$ is the algebra with the opposite multiplication.
\end{theorem}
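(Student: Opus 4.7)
The plan is to adapt the Koszul-dual strategy of Theorem~\ref{thm:coisotropicintersection} to the quantum setting, with the bar complex of a brace algebra playing the role of Koszul dual and cotensor product over a dg bialgebra replacing cotensor product over a $\bP_n$-coalgebra. The high-level heuristic is Dunn additivity $\bE_1 \otimes_{BV} \bE_1 \simeq \bE_2$: intersecting two boundary $\bE_1$-modules over an $\bE_2$-algebra should produce a single residual $\bE_1$-algebra.

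The first input is that the bar complex $\mathrm{B} A = T(A[1])$ of a brace algebra $A$ carries a canonical dg bialgebra structure, where the coproduct is the usual deconcatenation and the associative product on bar words is built combinatorially out of the brace operations on $A$. This is the quantum analogue of Proposition~\ref{prop:PnKoszul} for $n=1$. I would then upgrade the bar resolutions $\widetilde{B}_i = B_i \otimes T(A[1])$ of $B_1$ and $B_2$ (as right, resp.\ left $A$-modules) to dg algebras in left, resp.\ right $\mathrm{B} A$-comodules, using the left and right brace module operations. This is the quantum analogue of the statement that the Koszul dual of a coisotropic $A$-module carries a compatible $\bP_n$ module-algebra structure.

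Next I would form the cotensor product $\widetilde{B}_1 \,\mathbin{\square}_{\mathrm{B} A}\, \widetilde{B}_2$. Because $\mathrm{B} A$ is a dg bialgebra and the $\widetilde{B}_i$ are comodule algebras on opposite sides, this cotensor product inherits a dg algebra structure from the standard bialgebra--comodule formalism. On the other hand, because $\widetilde{B}_i$ is an $A$-bar resolution of $B_i$, the underlying complex is quasi-isomorphic to $B_1 \otimes^{\bL}_A B_2$, which yields the required dg algebra structure. The projection $B_1^{\op} \otimes B_2 \to B_1 \otimes^{\bL}_A B_2$ is induced by the counit $\mathrm{B} A \to k$, a bialgebra morphism, so the projection is an algebra map; the opposite algebra structure on $B_1$ arises from the order-reversing convention in transporting the right $\mathrm{B} A$-coaction on $\widetilde{B}_1$ through the cotensor symmetry.

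The main obstacle is proving that the dg algebra structure on $\widetilde{B}_i$ produced by the brace module operations is genuinely associative and compatible with the $\mathrm{B} A$-comodule structure. This amounts to a delicate combinatorial identity on brace operations, of the same flavor as the one underlying the dg bialgebra structure on $\mathrm{B} A$ itself and as the brace-algebraic proof of the Deligne conjecture. Once this compatibility is verified, the remaining steps are formal consequences of the bialgebra-comodule formalism and of the standard fact that one-sided bar resolutions compute derived tensor products.
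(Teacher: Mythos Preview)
Your proposal is correct and matches the paper's proof essentially step for step: the paper invokes the Gerstenhaber--Voronov dg bialgebra structure on $\T_\bullet(A[1])$ (Proposition~\ref{prop:bracekoszul}), upgrades the one-sided bar complexes to comodule dg algebras using the brace module operations, and then identifies the two-sided bar complex with the cotensor product of these comodule algebras. One small sharpening: since the paper takes $B_1\otimes^{\bL}_A B_2$ to \emph{mean} the two-sided bar complex and the one-sided bar comodules are cofree, the cotensor product is literally isomorphic (not merely quasi-isomorphic) to $B_1\otimes^{\bL}_A B_2$, so no homotopy-transfer step is needed.
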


We apply this result to quantum moment maps. Recall that a quantum moment map is given by a morphism of associative algebras $\U\g\rightarrow B$, where $B$ is an associative algebra. These are to be thought of as deformation quantizations of Poisson maps $\Sym\g\rightarrow B$ (classical moment map), where $B$ is a Poisson algebra.

A quantization of the $\bP_2$-algebra $\C^\bullet(\g, \Sym\g)$ is the brace algebra $\CC^\bullet(\U\g, \U\g)$, the Hochschild cochain complex of the universal enveloping algebra $\U\g$. We show that a quantum moment map $\U\g\rightarrow B$ makes $\CC^\bullet(\U\g, B)$ into a brace module over $\CC^\bullet(\U\g, \U\g)$. The tensor product
\[\CC^\bullet(\U\g, k)\otimes^{\bL}_{\CC^\bullet(\U\g, \U\g)} \CC^\bullet(\U\g, B)\]
computing derived quantum Hamiltonian reduction is therefore a dg algebra which is shown to be quasi-isomorphic to the quantum BRST complex \cite{KS}.

This point of view on quantum Hamiltonian reduction allows one to generalize ordinary (i.e. $\bE_1$) Hamiltonian reduction to $\bE_n$-algebras (algebras over the operad of little $n$-disks) which we sketch in Section \ref{sect:Enreduction}.

Both classical and quantum constructions can be put on the same footing if one starts with a deformation quantization for which we use the language of Beilinson--Drinfeld algebras \cite[Section 2.4]{CG}. We end the paper with some theorems that interpolate between classical coisotropic intersections and tensor products of brace modules.

\subsection*{Acknowledgements}

The author would like to thank Thel Seraphim for collaboration at an early stage of the project and the referee for many useful comments. This research was supported by the EPSRC grant EP/I033343/1.

\subsection*{Notation}

We work over a field $k$ of characteristic zero. We adopt the cohomological grading convention. By a dga we mean a differential graded algebra over $k$ not necessarily non-positively graded. For $A$ a dga and $M$ and $N$ two modules we denote by $M\otimes^{\bL}_A N$ the resolution given by the two-sided bar complex.

An $(n,m)$-shuffle $\sigma\in S_{n,m}$ is a permutation $\sigma\in S_{n+m}$ such that $\sigma(1)< \dots< \sigma(n)$ and $\sigma(n+1) < \dots < \sigma(n+m)$.

\section{Shifted Poisson algebras}

\label{sect:Pnalgebras}

\subsection{Polyvector fields}

Let $A$ be a cdga. We denote by $\T_A=\Der(A, A)$ the $A$-module of derivations which is a dg Lie algebra over $k$. We define the complex of $(n-1)$-shifted polyvector fields to be
\[\Pol(A, n-1) = \Hom_A(\Sym_A(\Omega^1_A[n]), A).\]
$\Pol(A, n-1)$ has a natural \emph{weight} grading under which $\Omega^1_A$ has weight $-1$ and we can decompose
\[\Pol(A, n-1) = \bigoplus_k \Pol(A, n-1)^k = \bigoplus_k \Hom_A(\Sym^k_A(\Omega^1_A[n]), A).\]

We denote by $\lrcorner$ the natural duality pairing between $\Pol(A, n-1)$ and $\Sym_A(\Omega^1[n])$. Given a polyvector $v\in \Pol(A, n-1)^k$ we define
\begin{equation}
v(a_1, \dots, a_k) = v\lrcorner (\ddr\otimes \dots\otimes \ddr)(a_1\otimes \dots\otimes a_k),
\label{eq:decalage}
\end{equation}
where the formal symbol $\ddr$ is put in degree $-n$ to fix the signs and $a_i\in A$. The symmetry of $v$ implies that
\[v(a_1, a_2, \dots, a_k) = (-1)^{|a_1||a_1|+n}v(a_2, a_1, \dots, a_k).\]
We define the Schouten bracket of $v\in\Pol(A, n-1)^k$ and $w\in\Pol(A, n-1)^l$ to be
\begin{align*}
[v, w](a_1,\dots,a_{k+l-1}) = &\sum_{\sigma\in S_{l, k-1}} \sgn(\sigma)^n(-1)^{\epsilon+\epsilon_1} v(w(a_{\sigma(1)},\dots,a_{\sigma(l)}), a_{\sigma(l+1)},\dots,a_{\sigma(k+l-1)}) \\
- &\sum_{\sigma\in S_{k, l-1}} \sgn(\sigma)^n(-1)^{\epsilon+\epsilon_2} w(v(a_{\sigma(1)},\dots,a_{\sigma(k)}), a_{\sigma(k+1)},\dots,a_{\sigma(k+l-1)}),
\end{align*}
where $(-1)^{\epsilon}$ denotes the sign coming from the Koszul sign rule applied to the permutation $\sigma$ of $a_i$ and the signs $\epsilon_i$ are
\begin{align*}
\epsilon_1 &= (|w|+l)(k+1)n + |v|n \\
\epsilon_2 &= (|v|-kn)(|w|-ln)+n(k+1)(|w|+1) + |v|n.
\end{align*}

The product of polyvector fields is defined to be
\[(v\cdot w)(a_1,\dots, a_{k+l}) = \sum_{\sigma\in S_{k.l}} \sgn(\sigma)^n(-1)^{\epsilon+\epsilon_1}v(a_{\sigma(1)},\dots,a_{\sigma(k)}) w(a_{\sigma(k+1)},\dots,a_{\sigma(k+l)}),\]
where the sign is
\[\epsilon_1 = |w|kn + \sum_{i=1}^k |a_{\sigma(i)}|(nl + |w|).\]

\subsection{Algebras}

Let us begin with the basic object in this section which is a weak (and shifted) version of Poisson algebras.

\begin{defn}
A \textit{$\widehat{\bP}_n$-algebra} is a cdga $A$ together with an $L_\infty$-algebra structure of degree $1-n$ such that the $L_\infty$ operations $l_k$ are polyderivations with respect to the multiplication. More explicitly, $l_k$ are multilinear operations of degree $1-(k-1)n$ satisfying the following equations:
\begin{itemize}
\item (Symmetry).
\[l_k(a_1, \dots, a_i, a_{i+1}, \dots, a_k) = (-1)^{|a_i||a_{i+1}| + n}l_k(a_1, \dots, a_{i+1}, a_i, \dots, a_k).\]

\item (Leibniz rule).
\[l_k(a_1, \dots, a_ka_{k+1}) = l_k(a_1, \dots, a_k)a_{k+1} + (-1)^{|a_k||a_{k+1}|} l_k(a_1, \dots, a_{k+1}) a_k.\]

\item (Jacobi identity).
\[0 = \sum_{k=1}^m (-1)^{nk(m-k)} \sum_{\sigma\in S_{k, m-k}} \sgn(\sigma)^n(-1)^{\epsilon} l_{m-k+1}(l_k(a_{\sigma(1)}, \dots, a_{\sigma(k)}), a_{\sigma(k+1)}, \dots, a_{\sigma(m)}),\]
where $\epsilon$ is the sign coming from the Koszul sign rule.
\end{itemize}
\end{defn}

Given a $\widehat{\bP}_n$-algebra $A$, the opposite algebra $A^{\op}$ is defined to be the same cdga together with operations $l_k^{\op} = (-1)^{k+1}l_k$.

There is also a strict version of Poisson algebras as follows.
\begin{defn}
A \textit{$\bP_n$-algebra} is a $\widehat{\bP}_n$-algebra such that the operations $l_k$ vanish for $k > 2$. In this case we denote the operation $l_2$ by $\{a,b\}$.
\end{defn}

\begin{defn}
A \textit{morphism of $\widehat{\bP}_n$-algebras} $f\colon A\rightarrow B$ is a chain map of complexes $f\colon A\rightarrow B$ strictly preserving the multiplication and the $L_\infty$ operations $l_k$.
\end{defn}

Here is an important example of a $\bP_{n+1}$-algebra. Observe that the Schouten bracket on $\Pol(A, n-1)$ has cohomological degree $-n$.

\begin{prop}
Let $A$ be a cdga. The product and Schouten bracket define a $\bP_{n+1}$-structure on the complex of $(n-1)$-shifted polyvector fields $\Pol(A, n-1)$.
\label{prop:Pnpolyvector}
\end{prop}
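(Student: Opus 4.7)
The plan is to verify the $\bP_{n+1}$-axioms directly from the explicit formulas for the product and the Schouten bracket given in the preceding section. The graded-commutative algebra structure is the standard one on a symmetric algebra, and a short calculation shows that the shuffle product on polyderivations matches the symmetric algebra multiplication under the identification \eqref{eq:decalage} of $v \in \Sym^k(\T_A[-n])$ with its associated $n$-shifted polyderivation $A^{\otimes k} \to A$. Graded antisymmetry of the bracket follows from the symmetry between its two summands after relabeling shuffles, once the $n$-dependent signs $\sgn(\sigma)^n$ and $\epsilon_i$ are matched.

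For the Leibniz rule — the assertion that $[v,-]$ is a graded derivation of the product — I would expand $[v, w_1 \cdot w_2]$ using the bracket formula together with the shuffle definition of $w_1 \cdot w_2$, and then regroup the nested shuffles of $\{1,\ldots,k+l_1+l_2-1\}$ according to whether the arguments of $v$ feed into $w_1$ or $w_2$. The two resulting partial sums collapse to the two terms on the right-hand side of the Leibniz identity after the Koszul signs are absorbed.

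The main obstacle is the graded Jacobi identity, which I would handle by a generators-and-relations argument rather than a brute-force sum comparison. Once the Leibniz rule is established, the Jacobi expression (the difference of its two sides) is a triderivation: a direct expansion shows that the would-be cross terms $\pm [w,u_1][v,u_2] \pm [v,u_1][w,u_2]$ cancel between its three summands, so the expression is a derivation in each input. It therefore suffices to check vanishing for $v, w, u \in A \oplus \T_A[-n] = \Sym^0 \oplus \Sym^1$; the full identity then follows by induction on the total weight $k+l+m$. On these generators the Schouten bracket reduces — after absorbing the shift into Koszul signs — to the commutator of derivations on $\T_A$, the action of a derivation on a function, and the zero bracket on a pair of elements of $A$, for each of which the graded Jacobi identity holds by direct inspection. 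Throughout, the only genuine computational difficulty is the careful bookkeeping of the sign factors $\sgn(\sigma)^n$ and $\epsilon_i$ induced by the shift from $\T_A$ to $\T_A[-n]$.
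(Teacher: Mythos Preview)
Your approach is correct and is the standard way to verify this; the paper itself states the proposition without proof, treating it as a well-known fact about polyvector fields. Your reduction of the Jacobi identity to generators via the triderivation property is exactly the right move, and the cross-term cancellation you describe is what makes it work.
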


A $\bP_n$-structure on a cdga $A$ is given by a bivector $\pi_A\in \Pol(A, n-1)$ of degree $n+1$, so that
\[\{a, b\}:=\pi_A(a,b).\]
The Jacobi identity for the bracket then becomes
\[[\pi_A, \pi_A] = 0.\]

Given a $\bP_n$-algebra $A$, we can naturally produce a $\bP_{n+1}$-algebra $\Z(A)$ as follows.

\begin{defn}
Let $A$ be a $\bP_n$-algebra. Its \textit{Poisson center} is the $\bP_{n+1}$-algebra given by the completion
\[\Z(A) = \widehat{\Pol}(A, n-1)\]
of the algebra of $(n-1)$-shifted polyvector fields with respect to the weight grading. The Lie bracket is given by the Schouten bracket. The differential has two components: the differential on the module of K\"{a}hler differentials and $[\pi_A, -]$.
\end{defn}

\begin{remark}
Suppose $A$ is a non-dg Poisson algebra. Then $\Z(A)$ coincides with the Lichnerowicz--Poisson complex $C^\bullet_{LP}(A, A)$, see \cite[Section 1.4.8]{Fre}, whose zeroth cohomology is the space of Casimir functions. See also \cite[Theorem 2]{CW} for a relation between $\Z(A)$ and a Poisson analog of the Hochschild complex.
\end{remark}

\begin{remark}
We believe that if $A$ is cofibrant as a commutative dg algebra, $\Z(A)$ is a model of the center of $A\in\Alg_{\bP_n}$ in the sense of \cite[Definition 5.3.1.6]{HA}. We will return to this comparison in a future work.
\label{remark:centralizer}
\end{remark}

We have a morphism
\[\Z(A)\rightarrow A\]
of commutative dg algebras given by projecting to the weight zero part of polyvector fields.

\subsection{Modules}

Let $A$ be a $\bP_{n+1}$-algebra and $M$ a cdga.

\begin{defn}
A \textit{coisotropic structure} on a morphism of commutative dg algebras $f\colon A\rightarrow M$ is a $\bP_n$-algebra structure on $M$ and a lift
\[
\xymatrix{
A \ar@{-->}^{\tilde{f}}[r] \ar^{f}[dr] & \Z(M) \ar[d] \\
& M,
}
\]
where $\tilde{f}\colon A\rightarrow \Z(M)$ is a morphism of $\bP_{n+1}$-algebras.
\label{defn:coisotropic}
\end{defn}

Here is a way to unpack this definition. A coisotropic structure consists of maps \[f_k\colon A\rightarrow \Hom_M(\Sym^k(\Omega^1_M[n]), M)\] for $k\geq 0$, where $f_0=f$ is the original morphism. We define the maps \[f_k\colon A\otimes M^{\otimes k}\rightarrow M[-nk]\] by
\[f_k(a;m_1,\dots,m_k):= f_k(a)(m_1,\dots,m_k).\]
They satisfy the following equations:

\begin{itemize}
\item (Symmetry).
\begin{equation}
f_k(a;m_1, \dots, m_i, m_{i+1}, \dots, m_k) = (-1)^{|m_i||m_{i+1}|+n} f_k(a; m_1, \dots, m_{i+1}, m_i, \dots, m_k)
\label{eq:coisotropic1}
\end{equation}
for every $a\in A$ and $m_i\in M$.

\item (Derivation).

\begin{equation}
f_k(a; m_1, \dots, m_km_{k+1}) = f_k(a; m_1, \dots, m_k) m_{k+1} + (-1)^{|m_k||m_{k+1}|} f_k(a; m_1,\dots, m_{k+1}) m_k
\label{eq:coisotropic2}
\end{equation}
for every $a\in A$ and $m_i\in M$.

\item (Compatibility with the differential).

\begin{align}
\label{eq:coisotropic3}
&\d f_k(a; m_1, \dots, m_k) = \\
&\qquad f_k(\d a; m_1, \dots, m_k) + \sum_{i=1}^k (-1)^{|a|+\sum_{j=1}^{i-1}|m_j|+nk} f_k(a; m_1, \dots, \d m_i, \dots, m_k) \nonumber \\
&-\sum_{i=1}^k (-1)^{n(|a|+i-1)+|m_i| \sum_{j=i+1}^k|m_j|}\{f_{k-1}(a; m_1, \dots, \hat{m}_i, \dots, m_k), m_i\} \nonumber \\
&+ \sum_{i<j} (-1)^{|m_i|\sum_{l=1}^{i-1}|m_l| + |m_j|\sum_{l=1,l\neq i}^{j-1}|m_l| + n(i+j)+|a|} f_{k-1}(a; \{m_i, m_j\}, m_1, \dots, \hat{m_i},\dots,\hat{m_j},\dots m_k)\nonumber
\end{align}
for every $a \in A$ and $m_i\in M$.

\item (Compatibility with the brackets).

For every $a_1,a_2\in A$ and $m_i\in M$ we have
\begin{align}
&f_k(\{a_1, a_2\}; m_1,\dots, m_k) \nonumber \\
&\qquad = \sum_{i+j=k+1}\sum_{\sigma\in S_{j, i-1}} \sgn(\sigma)^n(-1)^{\epsilon+\epsilon_1} f_i(a_1; f_j(a_2; m_{\sigma(1)}, \dots, m_{\sigma(j)}), m_{\sigma(j+1)}, \dots, m_{\sigma(k)}) \nonumber \\
&\qquad- \sum_{i+j=k+1} \sum_{\sigma\in S_{j, i-1}} \sgn(\sigma)^n(-1)^{\epsilon+\epsilon_2} f_i(a_2; f_j(a_1; m_{\sigma(1)}, \dots, m_{\sigma(j)}), m_{\sigma(j+1)}, \dots, m_{\sigma(k)}),
\label{eq:coisotropic4}
\end{align}
where the signs are
\begin{align*}
\epsilon_1 &= (|a_2|+j)(i+1)n + |a_1|n \\
\epsilon_2 &= (|a_1|-jn)(|a_2|-in)+n(j+1)(|a_2|+1) + |a_1|n.
\end{align*}

\item (Compatibility with the product).

For every $a_1,a_2\in A$ and $m_i\in M$ we have
\begin{align}
\label{eq:coisotropic5}
&f_k(a_1a_2; m_1,\dots,m_k) \\
&\qquad= \sum_{i+j=k} \sum_{\sigma\in S_{i,j}} \sgn(\sigma)^n (-1)^{\epsilon+\epsilon_1} f_i(a_1;m_{\sigma(1)},\dots,m_{\sigma(i)})f_j(a_2;m_{\sigma(i+1)},\dots,m_{\sigma(k)}), \nonumber
\end{align}
where the sign is $\epsilon_1 = |a_2|ni+\sum_{l=1}^i |m_{\sigma(l)}|(nj+|a_2|)$.
\end{itemize}

\begin{remark}
Equation \eqref{eq:coisotropic4} for $k=0$ reads as
\[f_0(\{a_1, a_2\}) = (-1)^{|a_1|n}f_1(a_1; f_0(a_2)) - (-1)^{n(|a_2|+1) + |a_1||a_2|} f_1(a_2; f_0(a_1)).\]

In particular, the kernel of $f_0$ is closed under the Poisson bracket and so $\Spec M\rightarrow \Spec A$ is a coisotropic subscheme in the usual sense.
\end{remark}

\begin{example}
Several examples of coisotropic structures as above are constructed in \cite[Examples 3.20 and 3.21]{JS} from shifted Lagrangian structures.
\end{example}

\begin{remark}
The above definition can be made into a two-colored operad $\bP_{[n+1, n]}$ so that a $\bP_{[n+1, n]}$-algebra is given by a triple of a $\bP_{n+1}$-algebra $A$, a $\bP_n$-algebra $B$ and a morphism of $\bP_{n+1}$-algebras $A\rightarrow \Z(B)$. This allows one to define an $\infty$-groupoid of coisotropic structures which is studied in \cite{MS16}. In particular, in \cite[Section 2.3]{MS17} Melani and the author show that arbitrary smooth coisotropic subschemes possess a coisotropic structure in this sense up to homotopy.
\end{remark}

\subsection{Koszul duality}
\label{sect:Koszul}

For a complex $A$ we denote by $\T_\bullet(A[1])$ the tensor coalgebra. As a complex,
\[\T_\bullet(A[1])\cong \bigoplus_{k=0}^\infty A^{\otimes k}[k].\]
We denote an element of $A^{\otimes k}$ by $[a_1| \dots|a_k]$ for $a_i\in A$. The canonical element in $A^{\otimes 0}$ is denoted by $[]$.

The coproduct is given by deconcatenation, i.e.
\[\Delta [a_1| \dots|a_k] = \sum_{i=0}^k [a_1|\dots|a_i]\otimes [a_{i+1}|\dots|a_k].\]

Let us denote by $\wedge$ the concatenation product:
\[[a_1|\dots|a_i]\wedge [a_{i+1}|\dots|a_k] = [a_1|\dots|a_k].\]

Note that the deconcatenation coproduct and concatenation product do not form a bialgebra structure.

If $A$ is a cdga, we can introduce the bar differential on $\T_\bullet(A[1])$ and a commutative multiplication given by shuffles. That is,
\begin{align*}
\d[a_1|\dots|a_k] = &\sum_{i=1}^k (-1)^{\sum_{q=1}^{i-1}|a_q| + i-1} [a_1|\dots|\d a_i|\dots|a_k] \\
+&\sum_{i=1}^{k-1} (-1)^{\sum_{q=1}^i |a_q|+i} [a_1|\dots|a_i a_{i+1}|\dots|a_k]
\end{align*}
and
\[[a_1|\dots|a_k]\cdot [a_{k+1}|\dots|a_{k+m}] = \sum_{\sigma\in S_{k, m}} (-1)^{\epsilon} [a_{\sigma(1)}|\dots|a_{\sigma(k+m)}],\]
where the sign $\epsilon$ is determined by assigning degrees $|a_i|-1$ to $a_i$. The element $[]\in \T_\bullet(A[1])$ is the unit for the shuffle product. We refer the reader to \cite[Section 1]{GJ} for a detailed explanations of all signs involved.

Now let $A$ be a $\bP_{n+1}$-algebra. Then we can define a Lie bracket on $\T_\bullet(A[1])$ by
\begin{align}
\label{eq:PnKoszulbracket}
&\{[a_1|\dots|a_k], [b_1|\dots|b_m]\} \\
= &\sum_{i,j} (-1)^{\epsilon + |a_i| + n + 1} ([a_1|\dots|a_{i-1}]\cdot [b_1|\dots|b_{j-1}])\wedge[\{a_i, b_j\}]\wedge ([a_{i+1}|\dots|a_k]\cdot [b_{j+1}|\dots|b_m]) \nonumber.
\end{align}
The sign $\epsilon$ is determined by the following rule: an element $b$ moving past $\{a.-\}$ produces a sign $(-1)^{(|b|+1)(|a|+n)}$. For instance,
\[\{[a], [b|c]\} = (-1)^{|a|+n+1}[\{a, b\}|c] + (-1)^{|b|(|a|+n)+1} [b|\{a, c\}].\]

\begin{remark}
The same Poisson bracket was previously introduced by Fresse \cite[Section 3]{Fre} under the name ``shuffle Poisson bracket''.
\end{remark}

\begin{defn}
A \textit{$\bP_n$-bialgebra} is a $\bP_n$-algebra $\tilde{A}$ together with a coassociative comultiplication $\tilde{A}\rightarrow \tilde{A}\otimes \tilde{A}$ which is a morphism of $\bP_n$-algebras.
\end{defn}

\begin{prop}
The differential, multiplication, comultiplication and bracket defined above endow $\T_\bullet(A[1])$ with a $\bP_n$-bialgebra structure.
\label{prop:PnKoszul}
\end{prop}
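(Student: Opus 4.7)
The plan decomposes the verification into four pieces, of which only one is genuinely novel. First, the underlying commutative dg bialgebra structure --- namely that the shuffle product, the deconcatenation coproduct, and the bar differential on $\T_\bullet(A[1])$ are mutually compatible --- is classical, and I would simply cite \cite[Section 1]{GJ} for the sign conventions. The remaining content is to verify that the bracket in \eqref{eq:PnKoszulbracket} is (a) graded antisymmetric of degree $1-n$, (b) a biderivation for the shuffle product, (c) compatible with the bar differential and satisfies the Jacobi identity, and (d) a morphism of coalgebras with respect to the deconcatenation coproduct on $\T_\bullet(A[1])\otimes \T_\bullet(A[1])$.

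For (a), each summand contains exactly one $A$-bracket $\{a_i, b_j\}$ of degree $-n$, and the shifted degrees of the remaining entries combine with it to give an output of degree $1-n$; antisymmetry follows by reindexing $(i,j)\leftrightarrow (j,i)$, using the antisymmetry of $\{-,-\}$ on $A$ and the commutativity of the shuffle product. For (b), a position $i$ in a shuffle $x\cdot y$ is either an $x$-position or a $y$-position; splitting the sum in \eqref{eq:PnKoszulbracket} along this dichotomy and recombining the shuffles of the remaining letters via associativity and commutativity of $\cdot$ yields the Leibniz identity $\{x\cdot y, z\} = x\cdot\{y,z\} \pm \{x,z\}\cdot y$. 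For the differential half of (c), the multiplicative part of the bar differential is absorbed by the Leibniz rule of $\{-,-\}$ on $A$, and the internal part by the $\d$-compatibility of $\{-,-\}$ on $A$. For (d), deconcatenating a term of the form $(\text{left shuffle})\wedge[\{a_i, b_j\}]\wedge(\text{right shuffle})$ places the bracketed letter into exactly one of the two halves, and matching the resulting sum against the biderivation formula for the bracket on the tensor product is a direct bookkeeping exercise using the bialgebra compatibility between $\cdot$ and $\Delta$.

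The Jacobi identity is the main obstacle. After expanding $\{\{x,y\},z\}$ and its cyclic permutations, the terms fall into two families: those in which the three active $A$-letters come from three distinct positions among $x$, $y$, $z$, and those in which the output $\{a_i, b_j\}$ of an inner bracket is itself the input of an outer bracket. The terms of the first type should cancel in cyclic pairs after exploiting the symmetry of the shuffle combinatorics on either side of the two inserted brackets; the terms of the second type collapse, letter by letter, to the Jacobi identity of $\{-,-\}$ on the $\bP_{n+1}$-algebra $A$. The bulk of the work is then sign bookkeeping, particularly the interplay between the $\bP_{n+1}$-degree conventions on $A$ and the shifted $\bP_n$-conventions on $\T_\bullet(A[1])$. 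Conceptually, however, the picture is clean: \eqref{eq:PnKoszulbracket} is the explicit incarnation of the $\bP_n$-structure on the bar complex of a $\bP_{n+1}$-algebra predicted by the additivity equivalence $\bP_{n+1}\cong \bE_1\otimes_{BV}\bP_n$, and all the axioms must hold on formal grounds.
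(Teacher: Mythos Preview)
Your plan is sound and would eventually succeed, but it diverges from the paper's argument at the crucial step and misses a simplification that removes almost all of the bookkeeping you flag as ``the bulk of the work.''

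Both you and the paper begin the same way: cite \cite{GJ} for the underlying commutative dg bialgebra structure, observe antisymmetry from commutativity of the shuffle product, and verify compatibility of the bracket with the deconcatenation coproduct by a direct computation (your item (d); the paper does this first). The difference is in how the remaining three identities --- Jacobi, Leibniz, and compatibility with the bar differential --- are handled. You propose to expand each identity in full, classify the terms (for Jacobi, into ``three distinct active letters'' versus ``iterated bracket'' families), and cancel them by hand. The paper instead observes that each of these identities, viewed as a map $f\colon \T_\bullet(A[1])^{\otimes 3}\to \T_\bullet(A[1])$ (or $^{\otimes 2}$ for the differential compatibility), satisfies
\[
\Delta\circ f = (f\otimes m + m\otimes f)\circ \Delta,
\]
and hence is uniquely determined by its projection to the cogenerators $A[1]$. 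One then only has to check that the $A$-component of each identity vanishes. For Jacobi this forces all three arguments into $A$ and reduces immediately to the Jacobi identity of $A$; for Leibniz the $A$-component is vacuous; for the differential compatibility the $A$-component reduces to the Leibniz rule and the $\d$-compatibility of the bracket on $A$. No shuffle combinatorics or cyclic cancellation of ``first-type'' terms is ever needed.

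What your approach buys is self-containment: it does not invoke the cofree coalgebra property, and it makes the combinatorial mechanism behind the identities visible. What the paper's approach buys is economy: three identities in a few lines each, with essentially no sign tracking. Your closing appeal to the additivity equivalence $\bP_{n+1}\cong \bE_1\otimes_{BV}\bP_n$ as a guarantee that ``all the axioms must hold on formal grounds'' should be dropped --- that equivalence is a deep result the paper explicitly avoids relying on, and in any case it does not certify that \emph{this particular formula} \eqref{eq:PnKoszulbracket} is the correct model.
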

\begin{proof}
See \cite[Proposition 4.1]{GJ} for the proof that $\T_\bullet(A[1])$ is a commutative dg bialgebra. We just need to show that the bracket is compatible with the other operations.

Let us first show that the Lie bracket is compatible with the coproduct. We will omit some obvious signs arising from a permutation of $a$ and $b$.

\begin{align*}
\{\Delta[a_1|\dots|a_k]&, \Delta[b_1|\dots|b_m]\} \\
= &\sum_{i, j} \{[a_1|\dots|a_i]\otimes [a_{i+1}|\dots|a_k], [b_1|\dots|b_j]\otimes [b_{j+1}|\dots|b_m]\} \\
= &\sum_{i, j} (-1)^{\epsilon}\{[a_1|\dots|a_i], [b_1|\dots|b_j]\}\otimes ([a_{i+1}|\dots|a_k]\cdot [b_{j+1}|\dots|b_m]) \\
+&\sum_{i, j} (-1)^{\epsilon}([a_1|\dots|a_i]\cdot [b_1|\dots|b_j])\otimes \{[a_{i+1}|\dots|a_k], [b_{j+1}|\dots|b_m]\} \\
= &\sum_{i,j,p,q} (-1)^{\epsilon} ([a_1|\dots|a_{p-1}]\cdot [b_1|\dots|b_{q-1}])\wedge [\{a_p, b_q\}]\wedge ([a_{p+1}|\dots|a_i]\cdot [b_{q+1}|\dots|b_j])\\
&\otimes ([a_{i+1}|\dots|a_k]\cdot [b_{j+1}|\dots|b_m]) \\
+&\sum_{i,j,p,q} (-1)^{\epsilon}([a_1|\dots|a_i]\cdot [b_1|\dots|b_j]) \\
&\otimes ([a_{i+1}|\dots|a_{p-1}]\cdot[b_{j+1}|\dots|b_{q-1}])\wedge [\{a_p, b_q\}]\wedge ([a_{p+1}|\dots|a_k]\cdot [b_{q+1}|\dots|b_m]) \\
= &\Delta\{[a_1|\dots|a_k], [b_1|\dots|b_m]\}.
\end{align*}
In the last equality we have used that the tensor coalgebra with a shuffle product is a bialgebra.

The fact that the Lie bracket is symmetric is obvious from the graded commutativity of the shuffle product.

The Jacobi identity and the Leibniz rule are morphisms $f\colon \T_\bullet(A[1])^{\otimes 3}\rightarrow \T_\bullet(A[1])$ satisfying \[\Delta_{\T_\bullet(A[1])}\circ f = (f\otimes m + m\otimes f)\circ \Delta_{\T_\bullet(A[1])^{\otimes 3}},\]
where $m\colon\T_\bullet(A[1])^{\otimes 3}\rightarrow \T_\bullet(A[1])$ is the multiplication map.

These are uniquely determined by the projections
\[\T_\bullet(A[1])^{\otimes 3}\rightarrow \T_\bullet(A[1])\rightarrow A[1]\]
to cogenerators. Therefore, to check the relevant identities, we just need to see that the components landing in $A$ are all zero.

\begin{itemize}
\item (Jacobi identity). The Lie bracket has a component in $A$ only if both arguments are in $A$. Therefore, the Jacobi identity in $\T_\bullet(A[1])$ reduces to the Jacobi identity in $A$ itself.

\item (Leibniz rule). The Leibniz rule \[\{a, bc\} = \{a, b\}c + (-1)^{|b||c|}\{a, c\}b,\quad a,b,c\in \T_\bullet(A[1])\] has components in $A$ only if either $a$ or $b$ are 1. In that case the Leibniz rule is tautologically true.

\item (Compatibility with the differential). The compatibility relation \[\d\{a, b\} = (-1)^{n+1}\{\d a, b\} + (-1)^{|a|+n+1}\{a, \d b\}\] has components in $A$ if either both $a$ and $b$ are in $A$ or one of them is in $A$ and the other one is in $A^{\otimes 2}$. In the first case the compatibility of the bracket on $\T_\bullet(A[1])$ with the differential reduces to the compatibility of the bracket on $A$ with the differential. In the second case the $A$ component of the equation is
\[(-1)^{|b_1|}\{a, b_1\}b_2 + (-1)^{|b_1|(|a|+n+1)} b_1\{a, b_2\} = (-1)^{|b_1|}\{a, b_1b_2\}.\]

After multiplying through by $(-1)^{|b_1|}$ we get the Leibniz rule for the bracket on $A$.
\end{itemize}
\end{proof}

\begin{remark}
For $A$ a $\bP_{n+1}$-algebra the coalgebra $\T_\bullet(A[1])^{\cop}$ with the opposite coproduct is isomorphic to $\T_\bullet(A^{\op}[1])$ as a $\bP_n$-bialgebra via
\begin{equation}
[a_1|\dots|a_k]\mapsto (-1)^{k+\sum_{i<j} (|a_i|+1)(|a_j|+1)} [a_k|\dots|a_1].
\label{eq:cofreecop}
\end{equation}
\end{remark}

\subsection{Coisotropic intersection}

Let us now describe a relative version of the previous statement. Let $A$ be a $\bP_{n+1}$-algebra and $f\colon A\rightarrow M$ a coisotropic morphism. We are going to define a $\bP_n$-algebra structure on $\T_\bullet(A[1])\otimes M$, the one-sided bar complex of $M$. As before, we denote elements of $\T_\bullet(A[1])\otimes M$ by $[a_1|\dots|a_k|m]$.

Recall that the bar differential is given by
\begin{align*}
\d[a_1|\dots|a_k|m] = &\sum_{i=1}^k (-1)^{\sum_{q=1}^{i-1}|a_q|+i-1} [a_1|\dots|\d a_i|\dots|a_k|m]\\
+ &(-1)^{\sum_{q=1}^k|a_q|+k} [a_1|\dots|a_k|\d m] \\
+ &\sum_{i=1}^{k-1} (-1)^{\sum_{q=1}^i|a_q|+i} [a_1|\dots|a_i a_{i+1}|\dots|a_k|m] \\
+ &(-1)^{\sum_{q=1}^k|a_q|+k} [a_1|\dots|a_k m].
\end{align*}

One has an obvious coaction map making $\T_\bullet(A[1])\otimes M$ into a left dg $\T_\bullet(A[1])$-comodule. As a graded $\T_\bullet(A[1])$-comodule, $\T_\bullet(A[1])\otimes M$ is cofree.

Introduce a commutative multiplication on $\T_\bullet(A[1])\otimes M$ where the multiplication on $\T_\bullet(A[1])$ is given by shuffles as before and the multiplication on $M$ is coming from its cdga structure. The $L_\infty$ operations we are about to introduce are multiderivations, so by the relation
\[[a_1|\dots|a_k|m] = [a_1|\dots|a_k|1]\cdot[m]\]
it is enough to specify them when the arguments are either in $\T_\bullet(A[1])$ or in $M$. If all arguments are in $\T_\bullet(A[1])$, we define the brackets as before. We let
\begin{equation}
l_{k+1}([a_1|\dots|a_p|1], [m_1], \dots, [m_k]) =  (-1)^{(\sum_{q=1}^p |a_q|+p)(1-nk)} [a_1|\dots|a_{p-1}|f_k(a_p; m_1,\dots, m_k)]
\label{eq:barcomplexlk}
\end{equation}
and
\begin{equation}
l_2([m_1], [m_2]) = [\{m_1, m_2\}],
\label{eq:barcomplexl2}
\end{equation}
where the Poisson bracket on the right is the bracket in $M$. All the other brackets are defined to be zero.

\begin{defn}
A \textit{left $\widehat{\bP}_n$-comodule} $\tilde{M}$ over a $\bP_n$-bialgebra $\tilde{A}$ is a $\widehat{\bP}_n$-algebra $\tilde{M}$ together with a coassociative left coaction map $\tilde{M}\rightarrow \tilde{A}\otimes \tilde{M}$ which is a morphism of $\widehat{\bP}_n$-algebras.
\end{defn}

\begin{prop}
The differential, coaction, multiplication and $L_\infty$ operations defined above make $\T_\bullet(A[1])\otimes M$ into a left $\widehat{\bP}_n$-comodule over $\T_\bullet(A[1])$.
\end{prop}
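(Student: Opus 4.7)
The plan is to mimic the proof of Proposition \ref{prop:PnKoszul}: exploit that $\T_\bullet(A[1])\otimes M$ is cofree as a left $\T_\bullet(A[1])$-comodule with cogenerators $M$, so that any coderivation into it is determined by its projection to $M$. All identities involving the $L_\infty$ operations $l_k$ can then be checked after projecting to the cogenerator factor.

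First I would verify that the $l_k$ defined by \eqref{eq:barcomplexlk} are coderivations for the left coaction and that the pointwise multiplication is a morphism of coalgebras. The multiplication case follows because $\T_\bullet(A[1])$ is a bialgebra under shuffle/deconcatenation, as already established in the proof of Proposition \ref{prop:PnKoszul}, and $M$ sits in the cogenerator degree. For the coderivation property, the formula \eqref{eq:barcomplexlk} only modifies the last letter $a_p$ of one bar chain (replacing $a_p$ by $f_k(a_p; m_1, \dots, m_k)\in M$), so under deconcatenation the output splits in the expected way.

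Next I would check each $\widehat{\bP}_n$-axiom (symmetry, Leibniz rule, compatibility with differential, $L_\infty$ Jacobi identity) by projecting to $M$. Since the $l_k$ are multiderivations for the pointwise product and $[a_1|\dots|a_p|m] = [a_1|\dots|a_p|1]\cdot[m]$, it suffices to test the axioms on arguments which are either of the form $[a_1|\dots|a_p|1]$ or of the form $[m]$. In each case the projection to $M$ reduces, after peeling off the concatenated prefix sitting in the coalgebra factor, to one of the defining relations of a coisotropic morphism: symmetry reduces to \eqref{eq:coisotropic1}, the Leibniz rule reduces to \eqref{eq:coisotropic2} or the Leibniz rule for the Poisson bracket on $M$, compatibility with the differential reduces to \eqref{eq:coisotropic3} together with the analogous identity on $M$, and the $L_\infty$ Jacobi identity splits by case on the number of coalgebra versus module arguments, giving \eqref{eq:coisotropic4} when two $\T_\bullet(A[1])$-arguments are present, \eqref{eq:coisotropic5} for the derivation axiom relating $l_k$ and the multiplication on $M$, and the ordinary Jacobi identity for the bracket on $M$ when only module arguments appear.

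The main obstacle will be the sign bookkeeping, particularly in the $L_\infty$ Jacobi identity when several of the arguments are long chains $[a_1|\dots|a_p|1]$: the shuffle signs from the concatenation product, the Koszul signs coming from permutations of bar letters in degree shifted by one, and the prefactor $(-1)^{(\sum|a_q|+p)(1-nk)}$ in \eqref{eq:barcomplexlk} all compound. After projecting to $M$, these signs must be shown to reassemble into the signs $\epsilon_1, \epsilon_2$ appearing in \eqref{eq:coisotropic4} and \eqref{eq:coisotropic5}; once that is done, no additional identities are needed beyond those already packaged into the definition of a coisotropic morphism.
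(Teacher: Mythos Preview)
Your approach is essentially the paper's: use cofreeness of $\T_\bullet(A[1])\otimes M$ to reduce all identities to their projection to $M$, restrict to arguments of the form $[a_1|\dots|a_p|1]$ or $[m]$, and match each case to one of the coisotropic axioms.

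One organizational point to correct. In a $\widehat{\bP}_n$-algebra there is no separate ``compatibility with differential'' axiom: since $l_1=\d$, that relation is already a case of the $L_\infty$ Jacobi identity. In the paper all of \eqref{eq:coisotropic3}, \eqref{eq:coisotropic4}, \eqref{eq:coisotropic5} and the Jacobi identity on $M$ arise as the four nontrivial cases of the Jacobi relation projected to $M$, according to what the coalgebra arguments look like: (i) no coalgebra argument gives Jacobi on $M$; (ii) one argument $[a|1]$ with $a\in A$ gives \eqref{eq:coisotropic3}; (iii) two arguments $[a_1|1],[a_2|1]$ gives \eqref{eq:coisotropic4}; (iv) one argument $[a_1|a_2|1]$ of bar-length two gives \eqref{eq:coisotropic5}. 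The last case is the one your sketch mislabels: \eqref{eq:coisotropic5} concerns the product on $A$, not on $M$, and it enters because the bar differential applied to $[a_1|a_2|1]$ produces the term $[a_1a_2|1]$. Make sure your case split includes (iv) explicitly; with that adjustment the plan goes through exactly as you describe.
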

\begin{proof}
To prove compatibility of the $L_\infty$ operations with the coaction, it is enough to assume each argument is either in $M$ or in $\T_\bullet(A[1])$. If all arguments are in $\T_\bullet(A[1])$, the compatibility with the coaction was checked in Proposition \ref{prop:PnKoszul}. If all arguments are in $M$ and $k=2$ we have
\[\Delta l_2([m_1], [m_2]) = []\otimes [\{m_1, m_2\}]\]
and
\[l_2(\Delta([m_1]), \Delta([m_2])) = l_2([]\otimes [m_1], []\otimes [m_2]) = []\otimes [\{m_1, m_2\}].\]

If all but one arguments are in $M$ and $k$ is arbitrary we have
\begin{align*}
&l_k(\Delta[a_1|\dots|a_p|1], []\otimes [m_1], \dots, []\otimes [m_{k-1}])\\
&\qquad=\sum_{i=0}^p l_k([a_1|\dots|a_i]\otimes [a_{i+1}|\dots|a_p|1], []\otimes [m_1], \dots, []\otimes [m_{k-1}]) \\
&\qquad=\sum_{i=0}^p (-1)^{\sum_{q=1}^i |a_q|(1-(k-1)n)} [a_1|\dots|a_i]\otimes l_k([a_{i+1}|\dots|a_p|1], [m_1], \dots, [m_{k-1}]) \\
&\qquad=\sum_{i=0}^p (-1)^{\sum_{q=1}^p |a_q|(1-(k-1)n)} [a_1|\dots|a_i]\otimes [a_{i+1}|\dots|f_{k-1}(a_p;m_1,\dots, m_{k-1})]
\end{align*}
and
\begin{align*}
&\Delta l_k([a_1|\dots|a_p|1], [m_1], \dots, [m_{k-1}]) \\
&\qquad= (-1)^{\sum_{q=1}^p |a_q|(1-(k-1)n)} \Delta [a_1|\dots|a_{p-1}|f_{k-1}(a_p;m_1,\dots,m_{k-1})]\\
&\qquad= (-1)^{\sum_{q=1}^p |a_q|(1-(k-1)n)}\sum_{i=0}^p [a_1|\dots|a_i]\otimes[a_{i+1}|\dots|f_{k-1}(a_p;m_1,\dots, m_{k-1})].
\end{align*}

Therefore, as before it is enough to check symmetry, the Leibniz rule and Jacobi identity only after projecting to $M$. The operation $l_k$ has a component in $M$ if either all but one arguments are in $M$ and one argument is in $A$ or $k=2$ and both arguments are in $M$.

\begin{itemize}
\item (Symmetry). Symmetry is clear for $l_2(m_1, m_2)$. For $l_k(a, m_1, \dots, m_{k-1})$ symmetry in the $m_i$ variables follows from the symmetry property \eqref{eq:coisotropic1} of $f_{k-1}$.

\item (Leibniz rule). If $k=2$ we need to check that
\[l_2([m_1], [m_2m_3]) = l_2([m_1], [m_2])[m_3] + (-1)^{|m_2||m_3|} l_2([m_1], [m_3])[m_2].\]
This is just an expression for the Leibniz rule in $M$. For any $k$ we also need to check that
\begin{align*}
l_k([a|1], [m_1], \dots, [m_{k-1}m_k]) &= l_k([a|1], [m_1], \dots, [m_{k-1}])[m_k] \\
&+ (-1)^{|m_{k-1}||m_k|} l_k([a|1], [m_1], \dots, [m_k])[m_{k-1}].
\end{align*}
This immediately follows from the derivation property \eqref{eq:coisotropic2} of $f_{k-1}$

\item (Jacobi identity).

The Jacobi identity has a component in $M$ in the following four cases:
\begin{enumerate}
\item All arguments are in $M$. In this case we get the Jacobi identity for the bracket in $M$.

\item One argument is in $A$, the rest are in $M$.

The Jacobi identity is
\begin{align*}
0=&(-1)^{nk}l_{k+1}([\d a|1], [m_1], \dots, [m_k])\\
&+ \d l_{k+1}([a|1], [m_1], \dots, [m_k]) \\
&+\sum_i (-1)^{|a|+\sum_{j=1}^{i-1} |m_j| + nk+1}l_{k+1}([a|1], [m_1], \dots, [\d m_i], \dots, [m_k]) \\
&+\sum_{i < j} (-1)^{\epsilon'}l_k([a|1], \{[m_i], [m_j]\}, \dots) \\
&+ \sum_i (-1)^{|m_i|\sum_{j=i+1}^k |m_j| + in}\{l_k([a|1], [m_1], \dots, \widehat{[m_i]}, \dots, [m_k]), [m_i]\},
\end{align*}
where the sign is
\[\epsilon'=|m_i|\sum_{p=1}^{i-1} |m_p| + |m_j|\sum_{p=1,p\neq i}^{j-1} |m_p| + n(i+j) + (|a|+1)(1-n).\]

Substituting $l_k$ in terms of $f_{k-1}$ from equation \eqref{eq:barcomplexlk} we obtain
\begin{align*}
0=&(-1)^{nk}(-1)^{|a|(1-nk)} f_k(\d a; m_1, \dots, m_k)\\
&+ (-1)^{(|a|+1)(1-nk)} \d f_k(a; m_1, \dots, m_k) \\
&+\sum_i (-1)^{|a|+\sum_{j=1}^{i-1} |m_j| + nk + (|a|+1)(1-nk)}f_k(a; m_1, \dots, \d m_i, \dots, m_k) \\
&+\sum_{i < j} (-1)^{|m_i|\sum_{p=1}^{i-1} |m_p| + |m_j|\sum_{p=1,p\neq i}^{j-1} |m_p| + n(i+j) + (|a|+1)nk}f_{k-1}(a; \{m_i, m_j\}, \dots) \\
&+ \sum_i (-1)^{|m_i|\sum_{j=i+1}^k |m_j| + in + (|a|+1)(1-n(k-1))}\{f_{k-1}(a; m_1, \dots, \hat{m_i}, \dots, m_k), m_i\}.
\end{align*}

After clearing out the signs, the equation coincides with \eqref{eq:coisotropic3}.

\item Two arguments are in $A$, the rest are in $M$.

The Jacobi identity is
\begin{align*}
0 &= (-1)^{|a_1|+n+1}l_{k+1}([\{a_1, a_2\}|1], [m_1], \dots, [m_k]) \\
&+\sum_{i+j = k+1} (-1)^{n(j+1)(k-j-1)} \sum_{\sigma\in S_{j, k-j}} \sgn(\sigma)^n (-1)^{\epsilon_m} (-1)^{nj + (|a_1|+1)(1+nj)}\times \\
&\qquad l_{i+1}([a_1|1], l_{j+1}([a_2|1], [m_{\sigma(1)}], \dots, [m_{\sigma(j)}]), [m_{\sigma(j+1)}], \dots, [m_{\sigma(k)}]) \\
&+ \sum_{i+j = k+1} (-1)^{n(j+1)(k-j-1)} \sum_{\sigma\in S_{j, k-j}} \sgn(\sigma)^n (-1)^{\epsilon_m} (-1)^{n(j+1) + (|a_2|+1)(nj + |a_1|)}\times \\
&\qquad l_{i+1}([a_2|1], l_{j+1}([a_1|1], [m_{\sigma(1)}], \dots, [m_{\sigma(j)}]), [m_{\sigma(j+1)}], \dots, [m_{\sigma(k)}]).
\end{align*}

Substituting $l_k$ in terms of $f_{k-1}$ from equation \eqref{eq:barcomplexlk} we obtain

\begin{align*}
0 &= (-1)^{|a_1|+n+1}(-1)^{(|a_1|+|a_2|-n+1)(1-nk)}f_k(\{a_1, a_2\}; m_1, \dots, m_k) \\
&+\sum_{i+j = k+1} (-1)^{n(j+1)(k-j-1)} \sum_{\sigma\in S_{j, k-j}} \sgn(\sigma)^n (-1)^{\epsilon_m+(|a_1|+1)n(k+1) + (|a_2|+1)(1+nj)+nj}\times \\
&\qquad f_i(a_1; f_j(a_2; m_{\sigma(1)}, \dots, m_{\sigma(j)}), m_{\sigma(j+1)}, \dots, m_{\sigma(k)}) \\
&+ \sum_{i+j = k+1} (-1)^{n(j+1)(k-j-1)} \sum_{\sigma\in S_{j, k-j}} \sgn(\sigma)^n (-1)^{} \times \\
&\qquad f_i(a_2; f_j(a_1; m_{\sigma(1)}, \dots, m_{\sigma(j)}), m_{\sigma(j+1)}, \dots, m_{\sigma(k)}),
\end{align*}
where the last sign is
\[\epsilon'=\epsilon_m+(|a_2|+1)(1-n(k+1)+|a_1|) + (|a_1|+1)(1-nj)+n(j+1).\]

After rearranging the signs, we get \eqref{eq:coisotropic4}.

\item One argument is in $(A[1])^{\otimes 2}$, the rest are in $M$.

The Jacobi identity is
\begin{align*}
0 &= (-1)^{nk}l_{k+1}(\d[a_1|a_2|1], [m_1], \dots, [m_k]) + \d l_{k+1}([a_1|a_2|1], [m_1], \dots, [m_k]) \\
&+\sum_{\substack{i+j=k\\i,j>0}} \sum_{\sigma \in S_{j, i}} \sgn(\sigma)^n (-1)^{nk(j+1) + \epsilon} \times \\
&\qquad l_{i+1}(l_{j+1}([a_1|a_2|1], [m_{\sigma(1)}], \dots, [m_{\sigma(j)}]), [m_{\sigma(j+1)}], \dots, [m_{\sigma(k)}]).
\end{align*}

The projection of each term to $M$ is
\begin{align*}
&l_{k+1}(\d[a_1|a_2|1], [m_1], \dots, [m_k])\\
&\qquad= (-1)^{|a_1|+1}l_{k+1}([a_1 a_2|1], [m_1], \dots, [m_k]) \\
&\qquad+ (-1)^{|a_1|+|a_2|} l_{k+1}([a_1|f_0(a_2)], [m_1], \dots, [m_k]) \\
&\qquad= (-1)^{|a_1|+1 + (|a_1|+|a_2|+1)(1-nk)} f_k(a_1 a_2; m_1, \dots, m_k) \\
&\qquad+ (-1)^{|a_1|+|a_2|(1+\sum_{i=1}^k |m_i|) + (|a_1|+1)(1-nk)} f_k(a_1; m_1, \dots, m_k) f_0(a_2),\\
&\d l_{k+1}([a_1|a_2|1], [m_1], \dots, [m_k]) = (-1)^{|a_1| + 1 + (|a_1|+|a_2|)(1-nk)} f_0(a_1)f_k(a_2; m_1, \dots, m_k),\\
&l_{i+1}(l_{j+1}([a_1|a_2|1], [m_{\sigma(1)}], \dots, [m_{\sigma(j)}]), [m_{\sigma(j+1)}], \dots, [m_{\sigma(k)}]) \\
&\qquad=(-1)^{(|a_1|+|a_2|)(1-nj)} l_{i+1}([a_1|f_j(a_2; m_{\sigma(1)}, \dots, m_{\sigma(j)})], [m_{\sigma(j+1)}], \dots, [m_{\sigma(k)}]) \\
&\qquad=(-1)^{(|a_1|+|a_2|)(1-nj) + (|a_2|+\sum_{i=1}^j |m_{\sigma(i)}| + nj)\sum_{p=1}^{k-j} |m_{\sigma(j+p)}| + (|a_1|+1)(1-ni)}\times \\
&\qquad\qquad f_i(a_1; m_{\sigma(j+1)}, \dots, m_{\sigma(k)}) f_j(a_2; m_{\sigma(1)}, \dots, m_{\sigma(j)})
\end{align*}

Let us denote by $\overline{\sigma}\in S_{i, j}$ the shuffle obtained from $\sigma$ by swapping the blocks $\sigma(1),\dots,\sigma(j)$ and $\sigma(j+1),\dots, \sigma(k)$. That is, $\overline{\sigma}(p) = \sigma(j+p)$ for $1\leq p\leq i$ and $\overline{\sigma}(p) = \sigma(p-i)$ for $i<p\leq k$. Denote by $\overline{\epsilon}$ the Koszul sign corresponding to the shuffle $\overline{\sigma}$. We have
\[\sgn(\overline{\sigma}) = \sgn(\sigma) (-1)^{j(k-j)}\]
and
\[(-1)^{\overline{\epsilon}} = (-1)^{\epsilon} (-1)^{\sum_{i=1}^j |m_{\sigma(i)}| \sum_{p=1}^{k-j} |m_{\sigma(j+p)}|}.\]

The Jacobi identity becomes
\begin{align*}
0 &= (-1)^{|a_1|+(|a_1|+|a_2|)(1-nk)} f_k(a_1 a_2; m_1, \dots, m_k)\\
&-(-1)^{|a_1|+|a_2|(1+\sum_{i=1}^k |m_i|) + |a_1|(1-nk)} f_k(a_1; m_1, \dots, m_k) f_0(a_2) \\
&-(-1)^{|a_1|+(|a_1|+|a_2|)(1-nk)} f_0(a_1)f_k(a_2; m_1, \dots, m_k) \\
&+\sum_{i+j=k;i,j>0}\sum_{\overline{\sigma}\in S_{i,j}}\sgn(\overline{\sigma})^n (-1)^{\epsilon'} f_i(a_1; m_{\sigma(j+1)}, \dots, m_{\sigma(k)}) f_j(a_2; m_{\sigma(1)}, \dots, m_{\sigma(j)}),
\end{align*}
where the sign is
\[\epsilon' = (|a_1|+|a_2|)(1-nj) + (|a_2| + nj)\sum_{p=1}^{k-j} |m_{\sigma(j+p)}| + (|a_1|+1)(1-ni) + n(j+k).\]

Rearranging the signs, we obtain \eqref{eq:coisotropic5}.
\end{enumerate}
\end{itemize}
\end{proof}

In the same way we can make $M\otimes \T_\bullet(A[1])$ into a $\widehat{\bP}_n$-algebra compatibly with the right coaction of $\T_\bullet(A[1])$. The bar differential on $M\otimes \T_\bullet(A[1])$ is given by
\begin{align*}
\d[m|a_1|\dots|a_n] = &[\d m|a_1|\dots|a_n] \\
+ &\sum_{i=1}^n (-1)^{\sum_{q=1}^{i-1}|a_q|+i-1+|m|} [m|a_1|\dots|\d a_i|\dots|a_n] \\
+ &(-1)^{|m|+|a_1|+1}[m a_1|\dots|a_n] \\
+ &\sum_{i=1}^{n-1} (-1)^{\sum_{q=1}^i |a_q|+i+|m|} [m|a_1|\dots|a_i a_{i+1}|\dots|a_n].
\end{align*}

Moreover, $M\otimes \T_\bullet(A[1])$ is isomorphic to $\T_\bullet(A^{\op}[1])^{\cop}\otimes M^{\op}$ as right $\T_\bullet(A[1])$-comodules using the isomorphism \eqref{eq:cofreecop}. Here $M^{\op}$ represents the same cdga with the opposite bracket and the coisotropic structure given by $f_k^{\op} = (-1)^k f_k$. Using the previous theorem, we can make $M\otimes \T_\bullet(A[1])$ into a right $\widehat{\bP}_n$-comodule over $\T_\bullet(A[1])$.

Let us now combine left and right comodules.

\begin{thm}
Let $A$ be a $\bP_{n+1}$-algebra and $A\rightarrow M$ and $A\rightarrow N$ two coisotropic morphisms. Then the two-sided bar complex $N\otimes^{\bL}_A M$ has a natural structure of a $\widehat{\bP}_n$-algebra such that the natural projection $N^{\op}\otimes M\rightarrow N\otimes_A^{\bL} M$ is morphism of $\widehat{\bP}_n$-algebras.
\label{thm:coisotropicintersection}
\end{thm}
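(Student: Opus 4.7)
My plan is to realize the two-sided bar complex $N \otimes \T_\bullet(A[1]) \otimes M$ as a cotensor product of cofree comodules over the $\bP_n$-bialgebra $\T_\bullet(A[1])$ from Proposition \ref{prop:PnKoszul}, and to transport the $\widehat{\bP}_n$ structure from there. The preceding proposition equips $\T_\bullet(A[1]) \otimes M$ with a left cofree $\widehat{\bP}_n$-comodule structure, and the discussion following it equips $N \otimes \T_\bullet(A[1])$ with a right cofree $\widehat{\bP}_n$-comodule structure via the isomorphism \eqref{eq:cofreecop}. Because both comodules are cofree, their cotensor product over $\T_\bullet(A[1])$ is simply the triple tensor product $N \otimes \T_\bullet(A[1]) \otimes M$ equipped with the two-sided bar differential.

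I would then define the $\widehat{\bP}_n$ structure on $N \otimes \T_\bullet(A[1]) \otimes M$ by combining the operations of the two cofree comodules, extended as multiderivations with respect to the pointwise product (cdga structures on $N$ and $M$ together with the shuffle product on $\T_\bullet(A[1])$). Concretely, the bracket and higher $L_\infty$ operations are determined by their values on arguments supported in a single factor: on arguments in $\T_\bullet(A[1]) \otimes M$ they agree with those from the preceding proposition; on arguments in $N \otimes \T_\bullet(A[1])$ they agree with those from the right cofree comodule; on mixed $N$-$M$ arguments they vanish, since $N$ and $M$ occupy disjoint tensor factors in $N^{\op} \otimes M$.

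Verification of the $\widehat{\bP}_n$ identities then splits into cases by the support of the arguments. The pure $\T_\bullet(A[1]) \otimes M$ and pure $N \otimes \T_\bullet(A[1])$ cases are exactly the content of the preceding proposition. Cases mixing $N$ and $M$ but not $\T_\bullet(A[1])$ are trivial because the operations act on disjoint tensor factors. The delicate cases involve arguments with nontrivial components in all three factors; here the Jacobi identity decomposes into a sum of identities that follow from the compatibility of the bracket on $\T_\bullet(A[1])$ with its coproduct (established inside the proof of Proposition \ref{prop:PnKoszul}) together with the coassociativity of the induced bicomodule structure. For the final assertion, the projection $N^{\op} \otimes M \to N \otimes_A^{\bL} M$ is the inclusion of the weight-zero part of $\T_\bullet(A[1])$; on this subspace the structure restricts to the tensor product $\widehat{\bP}_n$ structure on $N^{\op} \otimes M$, with $N^{\op}$ arising from the sign twist in \eqref{eq:cofreecop}, so the map is a strict Poisson morphism.

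The main technical obstacle I anticipate is the sign bookkeeping for mixed-argument Jacobi identities, analogous to the case-by-case analysis carried out in the preceding proposition. However, each such identity decomposes cleanly into pieces already verified for the one-sided cofree comodules, with no essentially new calculations beyond those required to confirm that the left and right comodule structures are compatible across the central $\T_\bullet(A[1])$ factor.
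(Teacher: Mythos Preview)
Your approach is the same as the paper's at the conceptual level: realize $N\otimes^{\bL}_A M$ as the cotensor product of the one-sided bar comodules $\tilde N = N\otimes\T_\bullet(A[1])$ and $\tilde M = \T_\bullet(A[1])\otimes M$ over the $\bP_n$-bialgebra $\tilde A = \T_\bullet(A[1])$, and read off the $\widehat{\bP}_n$-structure from there. The identification $\tilde N\otimes^{\tilde A}\tilde M \cong N\otimes^{\bL}_A M$ via $\id_N\otimes\Delta\otimes\id_M$ is exactly what the paper uses.

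Where you diverge is in the execution. You propose to define the $L_\infty$ operations directly on $N\otimes\T_\bullet(A[1])\otimes M$ and then verify the $\widehat{\bP}_n$ axioms by a case analysis on the support of the arguments, anticipating delicate mixed cases and sign bookkeeping. The paper bypasses all of this with a one-line abstract argument: the tensor product $\tilde N\otimes\tilde M$ is automatically a $\widehat{\bP}_n$-algebra (tensor product of two such), the two coaction maps $\tilde N\otimes\tilde M\rightrightarrows \tilde N\otimes\tilde A\otimes\tilde M$ are $\widehat{\bP}_n$-algebra morphisms because the coactions themselves are, and an equalizer of $\widehat{\bP}_n$-algebra maps is a $\widehat{\bP}_n$-algebra. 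No new identities need to be checked beyond what was already done in the preceding proposition. Your anticipated ``main technical obstacle'' simply does not arise. The final claim about $N^{\op}\otimes M\to N\otimes^{\bL}_A M$ being Poisson is handled the same way in both arguments.
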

\begin{proof}
Let $\tilde{A} = \T_\bullet(A[1])$, $\tilde{N} = N\otimes \tilde{A}$ and $\tilde{M}=\tilde{A}\otimes M$. Then $\tilde{A}$ is a $\bP_n$-bialgebra, $\tilde{N}$ a right $\widehat{\bP}_n$-comodule and $\tilde{M}$ a left $\widehat{\bP}_n$-comodule over $\tilde{A}$.

We will first show that the cotensor product $\tilde{N}\otimes^{\tilde{A}} \tilde{M}$ is closed under the $\widehat{\bP}_n$-structures coming from $\tilde{N}\otimes\tilde{M}$.

Recall that
\[\tilde{N}\otimes^{\tilde{A}} \tilde{M}:=\eq(\tilde{N}\otimes \tilde{M}\rightrightarrows \tilde{N}\otimes \tilde{A}\otimes \tilde{M}),\]
where the two maps are coactions on $\tilde{M}$ and $\tilde{N}$ and the equalizer is the strict equalizer in the category of complexes. By definition the coaction
\[\tilde{M}\stackrel{\Delta_M}\rightarrow \tilde{A}\otimes \tilde{M}\]
is a morphism of $\widehat{\bP}_n$-algebras, so
\[\tilde{N}\otimes \tilde{M}\stackrel{\id_{\tilde{N}}\otimes \Delta_M}\rightarrow \tilde{N}\otimes \tilde{A}\otimes \tilde{M}\]
is also a morphism of $\widehat{\bP}_n$-algebras, but the forgetful functor from $\widehat{\bP}_n$-algebras to complexes creates limits, so the equalizer is also a $\widehat{\bP}_n$-algebra.

To conclude the proof of the theorem, we are going to construct an isomorphism
\[\tilde{N}\otimes^A \tilde{M}\cong N\otimes^\bL_A M.\]

The coproduct $\Delta\colon \tilde{A}\rightarrow \tilde{A}\otimes \tilde{A}$ induces an isomorphism
\[\Delta\colon \tilde{A}\rightarrow \eq(\tilde{A}\otimes \tilde{A}\rightrightarrows \tilde{A}\otimes \tilde{A}\otimes\tilde{A}),\]
where the two maps are $\Delta\otimes\id$ and $\id\otimes\Delta$. Therefore,
\[N\otimes^{\bL}_A M= N\otimes \T_\bullet(A[1])\otimes M\stackrel{\id_N\otimes \Delta\otimes \id_M}\longrightarrow N\otimes \T_\bullet(A[1])\otimes \T_\bullet(A[1])\otimes M\]
induces an isomorphism $N\otimes^\bL_A M\xrightarrow{\sim} \tilde{N}\otimes^A \tilde{M}$.
\end{proof}

\begin{remark}
Suppose $A\rightarrow M$ and $A\rightarrow N$ are two coisotropic morphisms as in the previous Theorem. Then any model of their derived intersection is quasi-isomorphic to the two-sided bar construction $N\otimes^\bL_A M$ and hence by the homotopy transfer theorem \cite[Section 10.3]{LV} we get an induced homotopy $\bP_n$-structure on the given model.
\end{remark}

\section{Classical Hamiltonian reduction}

Let $\g$ be a finite-dimensional dg Lie algebra over $k$ concentrated in non-positive degrees. In this section we apply results of the previous section to the $\bP_2$-algebra $A=\C^\bullet(\g, \Sym\g)$. The results of this section generalize in a straightforward way to $n$-shifted Hamiltonian reduction in which case we replace $A$ by the $\bP_{n+2}$-algebra $\C^\bullet(\g, \Sym(\g[-n]))$.

\subsection{Chevalley-Eilenberg complex}
Let $V$ be a $\g$-representation. The Chevalley--Eilenberg complex $\C^\bullet(\g, V)$ is defined to be
\[\C^\bullet(\g, V) = \Hom(\Sym(\g[1]), V)\]
with the differential
\begin{align}
(\d f)(x_1,\dots, x_n) &= \d f(x_1, \dots, x_n) \nonumber \\
&+\sum_{i=1}^n (-1)^{\sum_{p=1}^{i-1}|x_p| + |f| + n+1} f(x_1, \dots, \d x_i, \dots, x_n) \nonumber \\
&+\sum_{i<j} (-1)^{|x_i|\sum_{p=1}^{i-1} |x_p| + |x_j|\sum_{p=1,p\neq i}^{j-1} |x_p| + i + j + |f|} f([x_i, x_j], x_1, \dots, \widehat{x}_i, \dots, \widehat{x}_j, \dots, x_n) \nonumber \\
&+\sum_i (-1)^{|x_i|(\sum_{p=1}^{i-1} |x_p| + |f| + n + 1) + |f|+i+1} x_i f(x_1, \dots, \widehat{x}_i, \dots, x_n).
\label{eq:CEdifferential}
\end{align}

Here $|f|$ is the degree of $f$ in $\Hom(\Sym(\g[1]), V)$ and we have used the d\'{e}calage isomorphism as in \eqref{eq:decalage} to identify $\Hom(\Sym(\g[1]), -)$ with antisymmetric functions on $\g$.

The product
\begin{equation}
\smile\colon\C^\bullet(\g, A)\otimes \C^\bullet(\g, B)\rightarrow \C^\bullet(\g, A\otimes B)
\label{eq:CEproduct}
\end{equation}
is defined to be
\[(v\smile w)(x_1,\dots, x_{k+l}) = \sum_{\sigma\in S_{k.l}} \sgn(\sigma)(-1)^{\epsilon+\epsilon_1}v(x_{\sigma(1)},\dots,x_{\sigma(k)})\otimes w(x_{\sigma(k+1)},\dots,x_{\sigma(k+l)}),\]
where the sign is
\[\epsilon_1 = |w|k + \sum_{i=1}^k |x_{\sigma(i)}|(l + |w|).\]

\begin{remark}
Due to our finiteness assumptions on $\g$, we have an isomorphism
\[\C^\bullet(\g, V)\cong \Sym(\g^*[-1])\otimes V.\]

In particular, if $V$ is a semi-free commutative algebra, so is $\C^\bullet(\g, V)$.
\end{remark}

The algebra $\Sym\g$ has the Kirillov--Kostant Poisson structure given on the generators by $\pi(x_1, x_2) = [x_1, x_2]$ for $x_i\in\g$. The center of this $\bP_1$-algebra can be computed to be
\[\Z(\Sym \g) \cong \C^\bullet(\g, \Sym \g)\]
with the bracket
\begin{align*}
[v, w](x_1,\dots,x_{k+l-1}) = &\sum_{\sigma\in S_{l, k-1}} \sgn(\sigma)(-1)^{\epsilon+\epsilon_1} v(w(x_{\sigma(1)},\dots,x_{\sigma(l)}), x_{\sigma(l+1)},\dots,x_{\sigma(k+l-1)}) \\
- &\sum_{\sigma\in S_{k, l-1}} \sgn(\sigma)(-1)^{\epsilon+\epsilon_2} w(v(x_{\sigma(1)},\dots,x_{\sigma(k)}), x_{\sigma(k+1)},\dots,x_{\sigma(k+l-1)}),
\end{align*}
where $(-1)^{\epsilon}$ denotes the sign coming from the Koszul sign rule applied to the permutation $\sigma$ of $x_i$ and the signs $\epsilon_i$ are
\begin{align*}
\epsilon_1 &= (|w|+l)(k+1) + |v| \\
\epsilon_2 &= (|v|-k)(|w|-l)+(k+1)(|w|+1) + |v|.
\end{align*}

\subsection{Hamiltonian reduction}
\label{sect:classicalhamreduction}
Let $B$ be a $\bP_1$-algebra with a $\g$-action preserving the Poisson bracket. We denote by $a\colon \g\rightarrow \Der(B)$ the action map.

\begin{defn}
A $\g$-equivariant morphism of complexes $\mu\colon \g\rightarrow B$ is a \textit{moment map} for the $\g$-action on $B$ if the equation
\[\{\mu(x), b\} = a(x).b\]
is satisfied for all $x\in\g$ and $b\in B$. In this case we say that the $\g$-action is \textit{Hamiltonian}.
\label{def:momentmap}
\end{defn}

\begin{remark}
One can replace $\g$-equivariance in the definition of the moment map with the condition that the induced map $\Sym\g\rightarrow B$ is a morphism of $\bP_1$-algebras.
\end{remark}

\begin{defn}
Suppose $B$ is a $\bP_1$-algebra equipped with a $\g$-action and a moment map $\mu\colon \g\rightarrow B$. Its \textit{Hamiltonian reduction} is
\[B//\Sym\g:=\C^\bullet(\g, k)\otimes^{\bL}_{\C^\bullet(\g,\Sym\g)} \C^\bullet(\g, B).\]
\end{defn}

We will introduce a $\widehat{\bP}_1$-structure on this complex later in Corollary \ref{cor:BRSTcoisotropic}. Let us just mention a different complex used in derived Hamiltonian reduction called the classical BRST complex \cite{KS}
\[\C^\bullet(\g, \Sym(\g[1])\otimes B).\]
Here the differential on $\Sym(\g[1])\otimes B$ is the Koszul differential: given \[x_1\wedge \dots\wedge x_n\otimes b\in\Sym(\g[1])\otimes B\] we let
\begin{align*}
\d(x_1\wedge \dots\wedge x_n\otimes b) &= \sum_{i=1}^n (-1)^{(|x_i|+1)(\sum_{q=1}^{i-1} |x_q| + i - 1)} \d x_i\wedge x_1\wedge \dots\wedge \hat{x}_i\wedge \dots\wedge x_n\otimes b \\
&- \sum_{i=1}^n (-1)^{|x_i|\sum_{q=i+1}^n(|x_q|+1) + \sum_{q=1}^{i-1}(|x_q|+1)+|x_i|} x_1\wedge \dots\wedge \hat{x}_i\wedge \dots\wedge x_n\otimes \mu(x_i)b \\
&+ (-1)^{\sum_{q=1}^n |x_q| + n} x_1\wedge \dots\wedge x_n \otimes \d b.
\end{align*}

One can introduce a Poisson bracket on the classical BRST complex as follows. As a graded commutative algebra, the classical BRST complex is generated by $\g^*[-1]$, $\g[1]$ and $B$. We keep the bracket on $B$ and let the bracket between an element $\phi\in\g^*[-1]$ and an element $x\in\g[1]$ be the natural pairing: $\{\phi, x\}:= \phi(x)$. Then $\d$ is a derivation of the bracket precisely due to the moment map equation. In this way the classical BRST complex becomes a $\bP_1$-algebra.

\subsection{Hamiltonian reduction as a coisotropic intersection}

As a plain graded commutative algebra, $\C^\bullet(\g, B)\cong B\otimes \Sym(\g^*[-1])$, so its module of derivations is isomorphic to \[\T_B\otimes\Sym(\g^*[-1])\oplus B\otimes\g[1]\otimes\Sym(\g^*[-1])\] with the differential given by the sum of internal differentials on each term and the action map $\g\rightarrow \T_B$. Therefore, the Poisson center of $\C^\bullet(\g, B)$ is
\[\Z(\C^\bullet(\g, B))\cong \C^\bullet(\g, \widehat{\Sym}(\T_B[-1])\otimes \widehat{\Sym}(\g)).\]

Given a Hamiltonian $\g$-action on $B$, let us define the morphism
\[\C^\bullet(\g, \Sym\g)\rightarrow \Z(\C^\bullet(\g, B))\]
as follows. The cdga $\C^\bullet(\g, \Sym\g)$ is generated by $\C^\bullet(\g, k)$ and $\g\subset \Sym\g$. We let
\[\C^\bullet(\g, k)\hookrightarrow \C^\bullet(\g, \widehat{\Sym}(\T_B[-1])\otimes \widehat{\Sym}(\g))\]
be the natural embedding. The map
\[\g\rightarrow \C^\bullet(\g, \widehat{\Sym}(\T_B[-1])\otimes \widehat{\Sym}(\g))\]
is given by $x\mapsto \mu(x) - x$ for $v\in \g$.

\begin{prop}
Let $B$ be a $\bP_1$-algebra with a Hamiltonian $\g$-action. Then the morphism \[\C^\bullet(\g, \mu)\colon \C^\bullet(\g, \Sym \g)\rightarrow \C^\bullet(\g, B)\]
is coisotropic.
\label{prop:hamiltoniancoisotropic}
\end{prop}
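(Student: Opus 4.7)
The plan is to verify directly that the lift $\tilde{\mu}\colon \C^\bullet(\g,\Sym\g)\to \Z(\C^\bullet(\g, B))$ constructed in the paragraph preceding the statement is a map of $\bP_2$-algebras. Since $\C^\bullet(\g, \Sym\g)$ is generated as a cdga by $\C^\bullet(\g, k)$ together with $\g\subset\Sym\g\subset\C^0(\g,\Sym\g)$, and since both the product and the Poisson bracket are (bi)derivations, it suffices to check compatibility with the differentials and brackets on these generators. On the $\C^\bullet(\g, k)$-subalgebra the lift $\tilde{\mu}$ is by definition the natural inclusion into the weight-zero part of the polyvector algebra, so these checks are tautological. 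The content of the proof is what happens at $x\in\g$, where $\tilde{\mu}(x) = \mu(x) - x$.

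For differential compatibility at $x$ I would expand $\d_{\Z}(\mu(x) - x)$ according to the four pieces of the differential on the Poisson center $\Z(\C^\bullet(\g, B))$: the internal differentials of $B$ and of $\g$ contribute $\mu(\d x) - \d x$; the Chevalley--Eilenberg differential on $\mu(x)\in B\subset \C^0(\g, B)$ produces the $1$-cochain $y \mapsto a(y).\mu(x) = \mu([y,x])$ by $\g$-equivariance of $\mu$, while on $x\in \widehat{\Sym}(\g)$ it produces $-(y\mapsto [y,x])$ via the adjoint representation; the tangent-complex differential $\g[1]\to\T_B$ induced by the action produces $-a(x)\in\T_B$; and the Koszul twist by the Poisson bivector $\pi$ of $\C^\bullet(\g, B)$ contributes the Hamiltonian vector field $[\pi,\mu(x)]\in\T_B$ of $\mu(x)$. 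The moment map equation $\{\mu(x),-\}_B = a(x).(-)$ is precisely the identity $[\pi, \mu(x)] = a(x)$, which makes the two $\T_B$-contributions cancel. What survives reassembles as $\tilde{\mu}$ applied to $\d x + (y\mapsto [y,x])$, which is the differential of $x$ computed inside $\C^\bullet(\g,\Sym\g)$.

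For bracket compatibility on pairs $x, y\in\g$, the source bracket vanishes since it is the Schouten bracket of two weight-zero polyvectors on $\Sym\g$. On the target I expand $\{\mu(x)-x,\, \mu(y)-y\}$ bilinearly: $\{\mu(x), \mu(y)\}$ vanishes for the same Schouten reason; the mixed pairings $\{\mu(x), y\}$ and $\{x, \mu(y)\}$ vanish because $x$ and $y$ live in the $\widehat{\Sym}(\g)$ factor of the polyvector algebra and so act only on the $\Sym(\g^*[-1])$-factor of $\C^\bullet(\g, B)$, annihilating $B$; and $\{x, y\}$ is zero as the graded commutator of two contractions on the odd generators of $\Sym(\g^*[-1])$. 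The remaining cases (pairs inside $\C^\bullet(\g, k)$, and mixed $\g$ with $\g^*[-1]$) are essentially formal: they reduce to the natural contraction pairing, which is matched on both sides because only the $x$-summand of $\tilde{\mu}(x)$ contributes to a bracket with $\g^*[-1]$. Compatibility then propagates to the whole algebra by the biderivation property. The main obstacle throughout is the bookkeeping in the differential computation and, in particular, recognising that the Koszul twist $[\pi, \mu(x)]$ equals $a(x)$; this single cancellation is the precise way in which the moment map condition enters the coisotropic structure.
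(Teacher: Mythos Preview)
Your proof is correct and follows essentially the same approach as the paper: verify on generators that the lift $\tilde\mu$ is compatible with differentials and brackets, with the moment-map equation providing the key cancellation $[\pi,\mu(x)]=a(x)$ in the differential check. Your version is in fact more thorough than the paper's very terse argument, explicitly tracking the Chevalley--Eilenberg contributions and spelling out all bracket pairings among generators, whereas the paper dispatches bracket compatibility in a single sentence by noting that $B$ Poisson-commutes with the image of $\C^\bullet(\g,\Sym\g)$ inside $\Z(\C^\bullet(\g,B))$.
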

\begin{proof}
It is enough to check that the morphism we have defined on generators commutes with the differential and the brackets.

Indeed, it is clear that the embedding $\C^\bullet(\g, k)\hookrightarrow \Z(\C^\bullet(\g, B))$ commutes with differentials. For $x\in \g$
\[\d \mu(x) + [\pi, \mu(x)] - \d x - (-1)^{|x|}a(x) = \d \mu(x) - \d x = \mu(\d x) - \d x,\]
where in the first equality we have used the moment map equation
\[[\pi, \mu(x)](b) = (-1)^{|x|}\{\mu(x), b\} = (-1)^{|x|}a(x).b.\]

It is also clear that the morphism commutes with brackets as $B$ Poisson-commutes with $\C^\bullet(\g, \Sym(\g))\hookrightarrow \Z(\C^\bullet(\g, B))$.
\end{proof}

\begin{example}
Let $B = k$ with the trivial $\g$-action and $\mu = 0$.

The morphism $\C^\bullet(\g, \Sym\g)\rightarrow \C^\bullet(\g, k)$ given by the counit $\Sym\g\rightarrow k$ possesses a coisotropic structure given by the composite of the antipode $S\colon \Sym\g\rightarrow\Sym\g$ with the completion map
\[\C^\bullet(\g, \Sym\g)\stackrel{S}\rightarrow \C^\bullet(\g, \Sym\g)\rightarrow \Z(\C^\bullet(\g, k))\cong \C^\bullet(\g, \widehat{\Sym}(\g)).\]
\end{example}

\begin{cor}
The Poisson reduction
\[B//\Sym\g=\C^\bullet(\g, k)\otimes_{\C^\bullet(\g, \Sym \g)}^{\bL} \C^\bullet(\g, B)\]
carries a natural $\widehat{\bP}_1$-structure. Moreover, there is a zig-zag of quasi-isomorphisms of cdgas between $B//\Sym\g$ and the classical BRST complex.
\label{cor:BRSTcoisotropic}
\end{cor}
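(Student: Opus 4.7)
The plan is to separate the two assertions. The $\widehat{\bP}_1$-structure on $B/\!/\Sym\g$ is essentially free at this point: Proposition \ref{prop:hamiltoniancoisotropic} shows that $\mu\colon \C^\bullet(\g,\Sym\g)\to \C^\bullet(\g,B)$ is coisotropic, and the counit $\C^\bullet(\g,\Sym\g)\to \C^\bullet(\g,k)$ carries a coisotropic structure via the example immediately preceding the corollary (where $B=k$). Applying Theorem \ref{thm:coisotropicintersection} with $n=1$ to these two coisotropic morphisms out of the common $\bP_2$-algebra $\C^\bullet(\g,\Sym\g)$ equips the two-sided bar complex with the desired $\widehat{\bP}_1$-structure.

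For the quasi-isomorphism with the BRST complex, I will build a smaller semi-free model of the derived tensor product using a Koszul resolution. The key point is that $\Sym(\g[1])\otimes \Sym\g$, equipped with the Koszul differential induced by $\g[1]\hookrightarrow \Sym\g$, is a semi-free resolution of $k$ as a $\Sym\g$-module. Applying the functor $\C^\bullet(\g,-)$, I obtain a complex $\C^\bullet(\g,\Sym(\g[1])\otimes \Sym\g)$ which, as a graded object, is isomorphic to $\Sym(\g^*[-1])\otimes \Sym(\g[1])\otimes \Sym\g$, and is a semi-free resolution of $\C^\bullet(\g,k)$ as a module over $\C^\bullet(\g,\Sym\g)$. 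Tensoring with $\C^\bullet(\g,B)$ over $\C^\bullet(\g,\Sym\g)$ then yields $\C^\bullet(\g,\Sym(\g[1])\otimes B)$, which is the classical BRST complex.

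To finish I will compare the two models of the derived tensor product. On the one hand, the two-sided bar resolution $\C^\bullet(\g,k)\otimes^{\bL}_{\C^\bullet(\g,\Sym\g)}\C^\bullet(\g,B)$ computes $\mathrm{Tor}$ using the tensor coalgebra $\T_\bullet(\C^\bullet(\g,\Sym\g)[1])$. On the other hand, the semi-free model described above also computes this $\mathrm{Tor}$. A standard comparison argument gives a zig-zag of quasi-isomorphisms between the two, and one verifies that the differential on $\C^\bullet(\g,\Sym(\g[1])\otimes B)$ inherited from this identification agrees with the classical BRST differential: the three terms in the BRST differential (the Chevalley--Eilenberg differential on $\Sym(\g^*[-1])$, the Koszul differential pairing $\g[1]$ with $\mu(\g)\subset B$, and the internal differential on $B$) arise respectively from the Chevalley--Eilenberg differential on all three factors, from the boundary of the Koszul resolution composed with the coisotropic lift $x\mapsto \mu(x)-x$ of Proposition \ref{prop:hamiltoniancoisotropic}, and from the internal differential of $B$.

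The main obstacle I anticipate is purely bookkeeping: exhibiting the comparison map explicitly and checking that, after the dust of sign conventions settles, the twisted differential coming from the coisotropic lift $x\mapsto \mu(x)-x$ reproduces exactly the Koszul piece $x\wedge\cdots\otimes b\mapsto \cdots\otimes \mu(x)b$ in the BRST differential written in Section \ref{sect:classicalhamreduction}. Apart from that, no further conceptual input is needed.
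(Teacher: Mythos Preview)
Your argument for the $\widehat{\bP}_1$-structure is exactly the paper's: invoke Proposition~\ref{prop:hamiltoniancoisotropic}, the preceding example with $B=k$, and Theorem~\ref{thm:coisotropicintersection}.

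For the quasi-isomorphism with the BRST complex you take a genuinely different route. The paper does not build a Koszul resolution of $\C^\bullet(\g,k)$ over $\C^\bullet(\g,\Sym\g)$; instead it constructs an explicit map
\[
\C^\bullet(\g,k)\otimes^{\bL}_{\C^\bullet(\g,\Sym\g)}\C^\bullet(\g,B)\longrightarrow \C^\bullet(\g,\,k\otimes^{\bL}_{\Sym\g}B)
\]
given by the cup product~\eqref{eq:CEproduct}, and proves it is a quasi-isomorphism by a simplicial argument: writing the source as the realization of $V_\bullet\otimes W^1_\bullet$ (with $V_n=k\otimes(\Sym\g)^{\otimes n}\otimes B$ and $W^1_\bullet$ the bar simplicial object for $\Sym(\g^*[-1])$ over itself) and collapsing $W^1_\bullet$ to the constant object $\Sym(\g^*[-1])$. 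It then identifies the target with the BRST complex via the symmetrization map $\Sym(\g[1])\otimes B\to k\otimes^{\bL}_{\Sym\g}B$. What the paper's route buys is an actual map, not just a zig-zag, and it never needs to know that any Koszul-type complex is semi-free over $\C^\bullet(\g,\Sym\g)$.

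Your route is reasonable, but there is a soft spot you should address. You assert that $\C^\bullet(\g,\Sym(\g[1])\otimes\Sym\g)$ is a semi-free resolution of $\C^\bullet(\g,k)$ over $R:=\C^\bullet(\g,\Sym\g)$. As a graded $R$-module it is indeed free on $\Sym(\g[1])$, but the differential has, besides $\d_R$ and the Koszul piece, the Chevalley--Eilenberg term coming from the adjoint action of $\g$ on $\Sym(\g[1])$; this last term preserves the $\Sym(\g[1])$-degree, so the obvious filtration does not exhibit semi-freeness. You would need either a more clever filtration, or a direct argument that this complex is K-flat (e.g.\ via finiteness of $\Sym(\g[1])$ and a spectral-sequence comparison), before the ``standard Tor comparison'' applies. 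Once that is in place the rest of your outline goes through; alternatively, the paper's cup-product map gives you the comparison for free.
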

\begin{proof}
Combining Proposition \ref{prop:hamiltoniancoisotropic} with Theorem \ref{thm:coisotropicintersection}, we see that
\[\C^\bullet(\g, k)\otimes_{\C^\bullet(\g, \Sym \g)}^{\bL} \C^\bullet(\g, B)\]
carries a $\widehat{\bP}_1$-structure.

The two-sided bar complex $k\otimes_{\Sym\g}^\bL B$ is the geometric realization of the simplicial complex $V_\bullet$ where
\[V_n = k\otimes (\Sym \g)^{\otimes n}\otimes B.\]

We also denote by $W^1_\bullet$ the simplicial complex whose geometric realization is \[\Sym(\g^*[-1])\otimes^\bL_{\Sym(\g^*[-1])} \Sym(\g^*[-1])\] and by $W^2_\bullet$ the constant simplicial complex with $W^2_0=\Sym(\g^*[-1])$.

The two-sided bar complex $\C^\bullet(\g, k)\otimes_{\C^\bullet(\g, \Sym \g)}^{\bL} \C^\bullet(\g, B)$ is computed as the geometric realization of the simplicial complex $V_\bullet\otimes W^1_\bullet$ with the Chevalley-Eilenberg differential \eqref{eq:CEdifferential}. The multiplication map gives a weak equivalence of simplicial complexes $W^1_\bullet\rightarrow W^2_\bullet$ which extends to a weak equivalence of simplicial complexes $V_\bullet\otimes W^1_\bullet\rightarrow V_\bullet\otimes W^2_\bullet$ which acts as the identity on $V_\bullet$. This implies that the multiplication map gives a quasi-isomorphism of cdgas
\[\C^\bullet(\g, k)\otimes_{\C^\bullet(\g, \Sym \g)}^{\bL} \C^\bullet(\g, B)\rightarrow \C^\bullet(\g, k\otimes_{\Sym \g}^{\bL} B).\]

We have a quasi-isomorphism of $\g$-representations \[\Sym(\g[1])\otimes B\rightarrow k\otimes^{\bL}_{\Sym\g} B\]
given by the symmetrization
\[x_1\wedge \dots\wedge x_n\otimes b\mapsto \sum_{\sigma\in S_n} (-1)^{\epsilon} [x_{\sigma(1)}|\dots|x_{\sigma(n)}|b].\]
This gives a quasi-isomorphism of cdgas
\[\C^\bullet(\g, \Sym(\g[1])\otimes B)\rightarrow \C^\bullet(\g, k\otimes_{\Sym \g}^{\bL} B).\]

Combining these two quasi-isomorphisms we obtain a quasi-isomorphism
\[B//\Sym\g\rightarrow \C^\bullet(\g, \Sym(\g[1])\otimes B)\]
to the classical BRST complex.
\end{proof}

\begin{remark}
We do not know whether the classical BRST complex is quasi-isomorphic to $B//\Sym\g$ as a $\hat{\bP}_1$-algebra for general $\g$. However, let's restrict to the case $\g$ is an abelian Lie algebra.

We have a splitting of the multiplication map \[\Sym(\g^*[-1])\otimes \Sym(\g^*[-1])^{\otimes n}\otimes \Sym(\g^*[-1])\rightarrow \Sym(\g^*[-1])\]
given by sending $x\mapsto x\otimes 1^{\otimes n}\otimes 1$. This gives a splitting
\[\C^\bullet(\g, k\otimes_{\Sym \g}^{\bL} B)\rightarrow \C^\bullet(\g, k)\otimes_{\C^\bullet(\g, \Sym \g)}^{\bL} \C^\bullet(\g, B)=B//\Sym\g.\]

It is easy to check that the composite map
\[\C^\bullet(\g, \Sym(\g[1])\otimes B)\rightarrow \C^\bullet(\g, k\otimes_{\Sym \g}^{\bL} B)\rightarrow B//\Sym\g\]
is compatible with the Poisson structures.
\end{remark}

\section{Brace algebras}

\label{sect:bracealgebras}

In this section we introduce quantum versions of $\bP_2$-algebras called brace algebras introduced by Gerstenhaber and Voronov, see \cite{GV1} and \cite{GV2}. By a theorem of McClure and Smith \cite{MS} the brace operad controlling brace algebras is a model of the chain operad of little disks $\bE_2$.

\subsection{Algebras}

\begin{defn}
A \textit{brace algebra} $A$ is a dga together with brace operations $A\otimes A^{\otimes n}\rightarrow A[-n]$ for $n>0$ denoted by $x\{y_1,\dots,y_n\}$ satisfying the following equations:
\begin{itemize}
\item (Associativity).
\[x\{y_1,\dots,y_n\}\{z_1,\dots,z_m\} = \sum (-1)^{\epsilon} x \{z_1,\dots,z_{i_1},y_1\{z_{i_1+1},\dots\},\dots,y_n\{z_{i_n+1},\dots\},\dots,z_m\},\]
where the sum goes over the locations of the $y_i$ insertions and the length of each $y_i$ brace. The sign is
\[\epsilon = \sum_{p=1}^n(|y_p|+1)\sum_{q=1}^{i_p} (|z_q|+1).\]

\item (Higher homotopies).
\begin{align*}
\d(x\{y_1,\dots,y_n\}) &= (\d x)\{y_1,\dots,y_n\} \\
&+ \sum_i (-1)^{|x| + \sum_{q=1}^{i-1}|y_q| + i}x\{y_1,\dots, \d y_i,\dots,y_n\} \\
&+ \sum_i (-1)^{|x| + \sum_{q=1}^i|y_q| + i+1}x\{y_1,\dots,y_iy_{i+1},\dots,y_n\} \\
&- (-1)^{(|y_1|+1)|x|}y_1\cdot x\{y_2,\dots,y_n\} \\
&- (-1)^{|x|+\sum_{q=1}^{n-1} |y_q| + n}x\{y_1,\dots,y_{n-1}\}\cdot y_n.
\end{align*}

\item (Distributivity).
\[\sum_{k=0}^n (-1)^{|x_2|(\sum_{q=1}^k |y_q|+k)}x_1\{y_1,\dots,y_k\} x_2\{y_{k+1},\dots,y_n\} = (x_1\cdot x_2)\{y_1,\dots,y_n\}.\]
\end{itemize}
\end{defn}
In the axioms we use a shorthand notation $x\{\}\equiv x$.

\begin{remark}
These axioms coincide with the ones in \cite{GV1} if one flips the sign of the differential.
\end{remark}

For instance, the second axiom for $n=1$ is equivalent to
\[xy - (-1)^{|x||y|} yx = (-1)^{|x|}\d(x\{y\}) - (-1)^{|x|}(\d x)\{y\} + x\{\d y\}.\]
In other words, the multiplication is commutative up to homotopy.

One has the \textit{opposite} brace algebra $A^{\op}$ defined as follows. The product on $A^{\op}$ is opposite to that of $A$:
\[a\cdot^{\op} b := (-1)^{|a||b|} b\cdot a\]
while the braces on $A^{\op}$ are defined by
\[x\{y_1,\dots,y_n\}^{\op}=(-1)^{\sum_{i<j} (|y_i|+1)(|y_j|+1)+n} x\{y_n,\dots,y_1\}.\]

\subsection{Modules}

Let $A$ be a brace algebra. We are now going to define modules over such algebras.

\begin{defn}
A \textit{left brace $A$-module} is a dga $M$ together with a dg homomorphism $A\rightarrow M$ and brace operations $M\otimes A^{\otimes n}\rightarrow M[-n]$ denoted by $m\{x_1,\dots, x_n\}$ satisfying the following equations:
\begin{itemize}
\item (Compatibility). For any $x,y_i\in A$ one has
\[(x\cdot 1)\{y_1,\dots, y_n\} = x\{y_1,\dots, y_n\}\cdot 1.\]

\item (Associativity). For any $m\in M$ and $x_i,y_i\in A$ one has
\begin{align*}
m\{x_1,\dots,x_n\}\{y_1,\dots,y_m\} &= \sum (-1)^{\epsilon}\times \\
&\qquad m\{y_1,\dots,y_{i_1},x_1\{y_{i_1+1},\dots\},\dots,x_n\{y_{i_n+1},\dots\},\dots,y_m\},
\end{align*}
where the sign is
\[\epsilon = \sum_{p=1}^n(|x_p|+1)\sum_{q=1}^{i_p}(|y_q|+1).\]

\item (Higher homotopies). For any $m\in M$ and $x_i\in A$ one has
\begin{align*}
\d(m\{x_1,\dots,x_n\}) &= (\d m)\{x_1,\dots, x_n\} \\
&+\sum (-1)^{|m| + \sum_{q=1}^{i-1} |x_q| + i} m\{x_1, \dots, \d x_i, \dots, x_n\} \\
&+\sum (-1)^{|m| + \sum_{q=1}^i |x_q| + i + 1} m\{x_1, \dots, x_ix_{i+1}, \dots, x_n\} \\
&- (-1)^{|m|(|x_1|+1)}x_1\cdot m\{x_2,\dots, x_n\} \\
&- (-1)^{|m| + \sum_{q=1}^{n-1}|x_q| + n} m\{x_1, \dots, x_{n-1}\}\cdot x_n.
\end{align*}

\item (Distributivity). For any $m,n\in M$ and $x_i\in A$ one has
\[(mn)\{x_1,\dots, x_p\} = \sum_{k=0}^p (-1)^{|n|(\sum_{q=1}^k |x_q| + k)} m\{x_1,\dots, x_k\} n\{x_{k+1},\dots, x_p\}.\]
\end{itemize}
\label{defn:bracemodule}
\end{defn}

\begin{example}
If $A$ is a brace algebra, then it is a left brace $A$-module using the brace operations on $A$ itself.
\end{example}

\begin{remark}
Note that left brace modules are unrelated to the general notion of modules over an algebra over an operad, \cite[Section 12.3.1]{LV}. Our definition is analogous to the notion of a left module over an associative algebra while an operadic module over an associative algebra is a bimodule.
\end{remark}

We define \textit{right brace $A$-modules} to be left brace $A^{\op}$-modules. If $M$ is a left brace $A$-module, then $M^{\op}$ is naturally a right brace $A$-module with the brace operations mirror reversed.

\subsection{Koszul duality}

Let $A$ be a brace algebra. Recall from Section \ref{sect:Koszul} the bar complex $\T_\bullet(A[1])$ which is a dg coalgebra. Since $A$ is not commutative, the shuffle product is not compatible with the differential, so we introduce a slightly different product.

A product
\[\T_\bullet(A[1])\otimes \T_\bullet(A[1])\rightarrow \T_\bullet(A[1])\]
is uniquely specified by the projection to the cogenerators
\[A^{\otimes n}\otimes A^{\otimes m}\rightarrow A[1-n-m].\]

We let the maps with $n=1$ be given by the brace operations and the maps with $n\neq 1$ be zero. Our sign conventions are such that
\[[x]\cdot [y_1|\dots|y_n] = [x\{y_1,\dots,y_n\}]+\dots,\]
i.e. the leading term carries no extra sign.

Extending the product to the whole tensor coalgebra we obtain
\begin{align*}
[x_1|\dots|x_n]\cdot[y_1|\dots|y_m] &= \sum_{\{i_p,l_p\}_{p=1}^n} (-1)^{\epsilon} \times\\
&\qquad [y_1|\dots|y_{i_1}|x_1\{y_{i_1+1},\dots,y_{i_1+l_1}\}|\dots|x_n\{y_{i_n+1},\dots,y_{i_n+l_n}\}|\dots|y_n],
\end{align*}
where the sign is
\[\epsilon = \sum_{p=1}^n(|x_p|+1)\sum_{q=1}^{i_p}(|y_q|+1).\]

\begin{example}
Let $A$ be a commutative algebra considered as a brace algebra with vanishing brace operations. Then the product defined above coincides with the shuffle product.
\end{example}

The following statement is shown in \cite[Lemma 9]{GV2}.

\begin{prop}
Let $A$ be a brace algebra. The multiplication on $\T_\bullet(A[1])$ defined above makes it into a dg bialgebra.
\label{prop:bracekoszul}
\end{prop}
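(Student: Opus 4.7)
The strategy is to exploit the fact that $\T_\bullet(A[1])$ with the deconcatenation coproduct is the cofree conilpotent coassociative coalgebra on the cogenerators $A[1]$. In particular, any coalgebra morphism into $\T_\bullet(A[1])$ is uniquely determined by its composition with the projection $\pi\colon \T_\bullet(A[1])\to A[1]$, and any coderivation is similarly determined. The same mechanism was already exploited in the proof of Proposition \ref{prop:PnKoszul}, so the plan is to reuse it verbatim here, with the brace axioms playing the role previously played by the Poisson axioms.

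First I would check that the product $m$ is a coalgebra morphism, i.e.\ that $\Delta\circ m = (m\otimes m)\circ\tau_{23}\circ(\Delta\otimes\Delta)$. Both sides are maps $\T_\bullet(A[1])^{\otimes 2}\to \T_\bullet(A[1])\otimes\T_\bullet(A[1])$ that are coalgebra morphisms from the tensor square (with its standard coalgebra structure) to $\T_\bullet(A[1])$ after either factor is fixed, so by the universal property both are determined by $(\pi\otimes\pi)\circ\Delta\circ m$. Unwinding the definition of $m$, the projection $\pi\circ m$ sends $[x_1|\dots|x_n]\otimes[y_1|\dots|y_m]$ to $[x_1\{y_1,\dots,y_m\}]$ when $n=1$, to $[x]$ or $[y]$ when only one factor is nontrivial of length one, and to zero otherwise. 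Feeding this into the cofree extension formula reproduces exactly the stated multiplication, which is the content of the bialgebra axiom.

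Next I would verify associativity. The two maps $m\circ(m\otimes\id)$ and $m\circ(\id\otimes m)$ are coalgebra morphisms, so they coincide iff their projections to $A[1]$ agree. On $[x]\otimes[y_1|\dots|y_n]\otimes[z_1|\dots|z_m]$ the projection of $m\circ(m\otimes\id)$ only sees the length-one output $[x\{y_1,\dots,y_n\}\{z_1,\dots,z_m\}]$ of the inner product, while the projection of $m\circ(\id\otimes m)$ sums over all the ways the inner product $m([y_\bullet]\otimes[z_\bullet])$ can place the $y_i$'s into $z$'s before being braced by $x$. The resulting identity is precisely the brace associativity axiom. For arguments with the first tensor factor of length $>1$, the projection to $A[1]$ vanishes on both sides, so no additional relation is needed.

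Finally, for the dg structure I would split the bar differential as $d=d_{\mathrm{int}}+d_{\mathrm{mult}}$ (internal differential on letters plus contraction of adjacent letters via the product of $A$) and check that $d$ is a coderivation of the comultiplication, which is immediate, and that it is a derivation of $m$. Both sides of $d(x\cdot y)=dx\cdot y+(-1)^{|x|}x\cdot dy$ are coderivations along the coalgebra map $m$, hence determined by their projections to $A[1]$. On the length-one component, contributions of $d_{\mathrm{int}}$ together with the $y_iy_{i+1}$ and $x_jx_{j+1}$ pieces of $d_{\mathrm{mult}}$ reduce to the higher-homotopy brace axiom on inputs of the form $[x]\otimes[y_1|\dots|y_n]$; the pieces where a product $x\cdot 1$ or $1\cdot y$ is formed from the unit collapse away; and all remaining terms (when either tensor factor has length $\geq 2$) reproduce the distributivity axiom for brace algebras.

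The only real obstacle is sign bookkeeping: the Koszul signs built into the cofree coalgebra extension, the bar differential, and the brace axioms must all be matched. Thanks to the projection-to-cogenerators trick, however, each identity reduces to a single brace-algebra axiom with a fixed sign pattern, and the match is line-by-line rather than a global combinatorial tangle.
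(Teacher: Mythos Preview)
Your proposal is correct and follows essentially the same route as the paper: compatibility with the coproduct is taken as built into the definition of the product (you verify it, the paper declares it ``by definition''), and both associativity and the Leibniz rule are checked after projecting to the cogenerators $A[1]$, where they reduce respectively to the brace associativity axiom and to the higher-homotopy and distributivity axioms. The paper's case split for the Leibniz rule is slightly sharper than yours---first factor of length $1$ yields the higher-homotopy identity, length $2$ yields distributivity, length $>2$ is trivially zero on both sides---so you may want to phrase your ``remaining terms'' clause that way rather than ``either tensor factor has length $\geq 2$''.
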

\begin{proof}
By definition the product is compatible with the comultiplication and we only have to check associativity and the Leibniz rule for $\d$.

It is enough to check the components of the identities landing in $A[1]$.

\begin{itemize}
\item (Associativity). The equation
\[([x]\cdot[y_1|\dots|y_n])\cdot[z_1|\dots|z_m] = [x]\cdot ([y_1|\dots|y_n]\cdot[z_1|\dots|z_m])\]
has the following $A$ component:
\begin{align*}
&x\{y_1,\dots,y_n\}\{z_1,\dots,z_m\} \\
&\qquad= \sum_{\{i_p,l_p\}_{p=1}^n} (-1)^{\epsilon}x\{z_1,\dots,z_{i_1},y_1\{z_{i_1+1},\dots,z_{i_1+l_1}\},\dots,y_n\{z_{i_n+1},\dots,z_{i_n+l_n}\},\dots,z_n\}.
\end{align*}
This exactly coincides with the associativity property for brace algebras.

If we replace $[x]$ by $[x_1|\dots|x_m]$ for $m>1$, the associativity equation will have a trivial $A$ component.

\item (Derivation). The equation
\[\d([x]\cdot[y_1|\dots|y_n]) = [\d x] \cdot [y_1|\dots|y_n] + (-1)^{|x|+1} [x]\cdot \d[y_1|\dots|y_n]\]
has the following $A$ component:
\begin{align*}
\d(x\{y_1,\dots, y_n\}) &+ (-1)^{|x|+\sum_{q=1}^{n-1}|y_q| + n} x\{y_1,\dots,y_{n-1}\}\cdot y_n = \\
&- (-1)^{(|y_1|+1)|x|}y_1\cdot x\{y_2,\dots,y_n\} + (\d x)\{y_1,\dots, y_n\} \\
&+ \sum_{i=1}^n (-1)^{\sum_{q=1}^{i-1}|y_q|+|x|+i} x\{y_1,\dots\d y_i,\dots, y_n\} \\
&+ \sum_{i=1}^{n-1}(-1)^{\sum_{q=1}^i|y_q|+|x|+i+1}x\{y_1,\dots,y_iy_{i+1},\dots,y_n\}.
\end{align*}
This follows from the higher homotopy identities for brace algebras.

The equation
\[\d([x_1|x_2]\cdot[y_1|\dots|y_n]) = \d[x_1|x_2] \cdot [y_1|\dots|y_n] + (-1)^{|x_1|+|x_2|} [x_1|x_2]\cdot \d[y_1|\dots|y_n]\]
has the following $A$ component:
\begin{align*}
&\sum_{m=0}^n (-1)^{(|x_2|+1)(\sum_{q=1}^m |y_q| + m)} (-1)^{|x_1|+\sum_{q=1}^m |y_q| + m + 1}x_1\{y_1,\dots,y_m\}x_2\{y_{m+1},\dots,y_n\} \\
&\qquad= (-1)^{|x_1|+1}(x_1 x_2)\{y_1,\dots,y_n\}.
\end{align*}
This follows from the distributivity property for brace algebras.

If we instead have $[x_1|\dots|x_m]$ for $m>2$, this equation will have a trivial $A$ component.
\end{itemize}
\end{proof}

\begin{remark}
It is not difficult to see that $\T_\bullet(A[1])^{\cop}\cong \T_\bullet(A^{\op}[1])$ under the isomorphism \eqref{eq:cofreecop}. Here $(\dots)^{\cop}$ refers to the same dg algebra with the opposite coproduct and $A^{\op}$ is the opposite brace algebra.
\end{remark}

Let us move on to a relative version of this statement. Let $A$ be a brace algebra as before and $M$ a left brace $A$-module. Recall the differential on the bar complex $\T_\bullet(A[1])\otimes M$. We are going to define a dg algebra structure on $\T_\bullet(A[1])\otimes M$ compatibly with the left coaction of $\T_\bullet(A[1])$ such that $M$ and $\T_\bullet(A[1])$ are subalgebras. Thus, we just need to define a braiding morphism
\[M\otimes \T_\bullet(A[1])\rightarrow \T_\bullet(A[1])\otimes M.\]
Compatibility with the $\T_\bullet(A[1])$-comodule structure allows one to uniquely reconstruct this map from the composite
\[M\otimes \T_\bullet(A[1])\rightarrow \T_\bullet(A[1])\otimes M\rightarrow M.\]

We define it using the brace $A$-module structure on $M$. That is, the product is given by
\[[m]\cdot[x_1|\dots|x_n|1] = \sum_{i=0}^n (-1)^{|m|(\sum_{q=1}^i |x_q|+i)}[x_1|\dots|x_i|m\{x_{i+1},\dots, x_n\}].\]

\begin{prop}
Let $M$ be a left brace $A$-module. The previous formula defines a dga structure on $\T_\bullet(A[1])\otimes M$ compatibly with the left $\T_\bullet(A[1])$-comodule structure.
\end{prop}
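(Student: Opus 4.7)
The plan is to follow the strategy of Proposition \ref{prop:bracekoszul}. Compatibility of the multiplication with the left $\T_\bullet(A[1])$-coaction is built into its very definition: the displayed formula for $[m]\cdot[x_1|...|x_n|1]$ describes a braiding $M\otimes \T_\bullet(A[1])\rightarrow \T_\bullet(A[1])\otimes M$ that respects the $\T_\bullet(A[1])$-coaction on both sides, and the full multiplication is reconstructed from this braiding together with the product on $M$ and the product on $\T_\bullet(A[1])$ (the latter from Proposition \ref{prop:bracekoszul}), via the identification $[x_1|...|x_n|m] = [x_1|...|x_n|1]\cdot [m]$.

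Since $\T_\bullet(A[1])\otimes M$ is cofree as a left $\T_\bullet(A[1])$-comodule on cogenerators $M$, any morphism of left $\T_\bullet(A[1])$-comodules out of a tensor power of $\T_\bullet(A[1])\otimes M$ is uniquely determined by its composition with the projection $p\colon \T_\bullet(A[1])\otimes M\rightarrow M$ that sends $[m]$ to $m$ and kills all other components. Both sides of the associativity identity and of the Leibniz rule for $\d$ are maps of left comodules, so it suffices to verify these identities after post-composing with $p$.

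I would then enumerate the configurations that contribute nontrivially to the $M$-projection according to whether each argument is drawn from $M$ or from $\T_\bullet(A[1])$ (viewed as $\T_\bullet(A[1])\otimes 1$). For associativity: the configuration with all three arguments in $\T_\bullet(A[1])$ is absorbed by Proposition \ref{prop:bracekoszul}; the configuration with all three in $M$ reduces to associativity of the product on $M$; the configurations with exactly one argument in $M$ and two in $\T_\bullet(A[1])$ recover the Associativity axiom of Definition \ref{defn:bracemodule}; and the configurations with two in $M$ and one in $\T_\bullet(A[1])$ recover the Distributivity axiom. For the Leibniz rule $\d(u\cdot v) = \d u\cdot v + (-1)^{|u|} u\cdot \d v$, the nontrivial contributions come from both factors in $\T_\bullet(A[1])$ (Proposition \ref{prop:bracekoszul}), both in $M$ (Leibniz for $\d$ on $M$), and the mixed case (the Higher Homotopies axiom of Definition \ref{defn:bracemodule}).

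The main obstacle is bookkeeping: one has to match the Koszul signs arising from the bar differential on $\T_\bullet(A[1])\otimes M$ and from the defining formula for $[m]\cdot[x_1|...|x_n|1]$ against the signs appearing in Definition \ref{defn:bracemodule}. This sign-tracking is mechanical but error-prone, and runs entirely parallel to the calculation carried out in the proof of Proposition \ref{prop:bracekoszul}.
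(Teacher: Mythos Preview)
Your proposal is correct and is exactly the approach one would take; the paper in fact omits the proof of this proposition entirely, leaving it implicit that the argument runs parallel to that of Proposition \ref{prop:bracekoszul}. One small addendum: the Compatibility axiom of Definition \ref{defn:bracemodule}, which you do not mention, is precisely what guarantees that the inclusion $\T_\bullet(A[1])\hookrightarrow \T_\bullet(A[1])\otimes M$ sending $[x_1|\dots|x_n]\mapsto [x_1|\dots|x_n|1]$ is multiplicative, so that your ``all three arguments in $\T_\bullet(A[1])$'' case really does reduce to Proposition \ref{prop:bracekoszul}.
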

\begin{proof}
By construction the product on $\T_\bullet(A[1])\otimes M$ is compatible with the $\T_\bullet(A[1])$-coaction, so we just need to check the associativity of the product and the derivation property of $\d$. Due to the compatibility with the $\T_\bullet(A[1])$-coaction, it is enough to check the properties after projection to $M$.

\begin{itemize}
\item (Associativity). The equation
\[[mn]\cdot [x_1|\dots|x_p|1] = [m]\cdot ([n]\cdot [x_1|\dots|x_p|1])\]
has the $M$ component identified with the distributivity property of left brace modules.

Similarly, the equation
\[[m]\cdot ([x_1|\dots|x_n|1]\cdot [y_1|\dots|y_m|1]) = ([m]\cdot [x_1|\dots|x_n|1])\cdot [y_1|\dots|y_m|1]\]
has the $M$-component identified with the associativity property of left brace modules.

\item (Derivation). The equation
\[\d([m]\cdot [x_1|\dots|x_n|1]) = [\d m]\cdot[x_1|\dots|x_n|1] + (-1)^{|m|}[m]\cdot \d[x_1|\dots|x_n|1]\]
has the $M$ component identified with the higher homotopy identities of left brace modules.
\end{itemize}
\end{proof}

We have the same statement for right brace $A$-modules. Indeed, one can replace $A$ by $A^{\op}$ in the previous proposition and observe that the bar complexes $\T_\bullet(A[1])^{\cop}\otimes M$ and $M\otimes \T_\bullet(A[1])$ are isomorphic.

We can combine left and right modules as follows.

\begin{thm}
Let $A$ be a brace algebra, $M$ a left brace $A$-module and $N$ a right brace $A$-module. Then the intersection $N\otimes^{\bL}_A M$ carries a natural dga structure so that the projection $N^{\op}\otimes M\rightarrow N\otimes^{\bL}_A M$ is a morphism of dg algebras.
\label{thm:quantumintersection}
\end{thm}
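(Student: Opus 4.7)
The plan is to mimic the proof of Theorem \ref{thm:coisotropicintersection}, replacing the $\widehat{\bP}_n$-structures by associative dga structures and using the Koszul dual side afforded by Proposition \ref{prop:bracekoszul} and the two propositions on bar complexes that precede the statement. Set $\tilde{A} = \T_\bullet(A[1])$, $\tilde{M} = \T_\bullet(A[1])\otimes M$ and $\tilde{N} = N\otimes \T_\bullet(A[1])$. By Proposition \ref{prop:bracekoszul}, $\tilde{A}$ is a dg bialgebra; by the preceding proposition and its right-module analogue, $\tilde{M}$ is a dga with a left $\tilde{A}$-coaction by dga maps and $\tilde{N}$ is a dga with a right $\tilde{A}$-coaction by dga maps.

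Next I would form the cotensor product
\[\tilde{N}\otimes^{\tilde{A}}\tilde{M} = \eq\bigl(\tilde{N}\otimes \tilde{M}\rightrightarrows \tilde{N}\otimes \tilde{A}\otimes \tilde{M}\bigr),\]
where the two arrows are $\id_{\tilde{N}}\otimes \Delta_M$ and $\Delta_N \otimes \id_{\tilde{M}}$. Both arrows are dga maps between tensor products of dgas, so they are themselves dga maps; since the equalizer of a pair of dga maps is a sub-dga of the source, $\tilde{N}\otimes^{\tilde{A}}\tilde{M}$ inherits a canonical dga structure from $\tilde{N}\otimes \tilde{M}$.

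Then I would identify $\tilde{N}\otimes^{\tilde{A}}\tilde{M}$ with $N\otimes^{\bL}_A M$, just as at the end of the proof of Theorem \ref{thm:coisotropicintersection}: the map
\[N\otimes \T_\bullet(A[1])\otimes M \xrightarrow{\id_N\otimes \Delta\otimes \id_M} N\otimes \T_\bullet(A[1])\otimes \T_\bullet(A[1])\otimes M\]
lands inside the equalizer by coassociativity of $\Delta$ and is an isomorphism onto it. This transports the dga structure on $\tilde{N}\otimes^{\tilde{A}}\tilde{M}$ to $N\otimes^{\bL}_A M$. Finally, the projection $N^{\op}\otimes M \to N\otimes^{\bL}_A M$ factors as $N^{\op}\otimes M \hookrightarrow \tilde{N}\otimes \tilde{M} \to \tilde{N}\otimes^{\tilde{A}}\tilde{M}$, where the inclusion sends $n\otimes m$ to $[n|\,]\otimes[\,|m]$ and is readily checked to be a morphism of dgas using the definitions of the braiding products, so the composite projection is also a dga morphism.

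The only genuinely substantive point is that the braiding $M\otimes \T_\bullet(A[1])\to \T_\bullet(A[1])\otimes M$ in $\tilde{M}$ and the mirrored braiding in $\tilde{N}$ assemble with the multiplications on $\tilde{N}\otimes \tilde{M}$ so that the two coaction maps really are dga homomorphisms; this is exactly the content of the two propositions immediately preceding the theorem together with the observation that the coactions on $\tilde{M}$ and $\tilde{N}$ are compatible with the brace operations that define the braidings. I expect the main technical step to be unwinding the sign for the identification $\T_\bullet(A[1])^{\cop}\otimes N^{\op}\cong N\otimes \T_\bullet(A[1])$ well enough to confirm that under the isomorphism $\tilde{N}\otimes^{\tilde{A}}\tilde{M}\cong N\otimes^{\bL}_A M$ the induced multiplication really does restrict to the pointwise product on $N^{\op}\otimes M\subset \tilde{N}\otimes \tilde{M}$, but this is a bookkeeping exercise rather than a new idea.
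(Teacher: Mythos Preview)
Your proposal is correct and follows essentially the same route as the paper: set up $\tilde{A}=\T_\bullet(A[1])$, $\tilde{M}$, $\tilde{N}$, invoke Proposition \ref{prop:bracekoszul} and the preceding proposition to get compatible dga and comodule structures, and then identify $N\otimes^{\bL}_A M$ with the cotensor product $\tilde{N}\otimes^{\tilde{A}}\tilde{M}$, which inherits a dga structure as an equalizer of dga maps. You actually supply more detail than the paper does (the explicit equalizer argument and the check that the projection from $N^{\op}\otimes M$ is multiplicative), but the underlying strategy is identical.
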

\begin{proof}
By Proposition \ref{prop:bracekoszul} the bar complex $\T_\bullet(A[1])$ is a dg bialgebra.

Now let $\tilde{M}=\T_\bullet(A[1])\otimes M$ and $\tilde{N}=N\otimes \T_\bullet(A[1])$. By the previous proposition $\tilde{M}$ is a left $\T_\bullet(A[1])$-comodule while $\tilde{N}$ is a right $\T_\bullet(A[1])$-comodule.

The two-sided bar complex $N\otimes^{\bL}_A M$ is isomorphic to the cotensor product $\tilde{N}\otimes^{\T_\bullet(A[1])} \tilde{M}$. As both $\tilde{N}$ and $\tilde{M}$ are dg algebras which are compatible with the coaction of $\T_\bullet(A[1])$, their cotensor product is also a dga.
\end{proof}

\begin{remark}
Given a model for the derived tensor product of the right $A$-module $N$ and a left $A$-module $M$, it is quasi-isomorphic to the two-sided bar complex $N\otimes^\bL_A M$, so by homotopy transfer one can induce a homotopy associative structure on the given model.
\end{remark}

\section{Quantum Hamiltonian reduction}

\subsection{Hochschild cohomology}

Let $A$ be a dga and $B$ an $A$-bimodule. We define the Hochschild cochain complex $\CC^\bullet(A, B)$ to be the graded vector space
\[\CC^\bullet(A, B) = \bigoplus_{n=0}^\infty \Hom(A^{\otimes n}, B)[-n]\]
with the differential
\begin{align*}
(\d f)(x_1,\dots, x_n) &= \d f(x_1, \dots, x_n) \\
&+\sum_{i=1}^n (-1)^{|f| + \sum_{q=1}^{i-1} |x_q|+i+1} f(x_1, \dots, \d x_i, \dots, x_n) \\
&+\sum_{i=1}^{n-1} (-1)^{|f| + \sum_{q=1}^i |x_q|+i} f(x_1, \dots, x_ix_{i+1}, \dots, x_n) \\
&+ (-1)^{|f|(|x_1|+1)} x_1 f(x_2, \dots, x_n) + (-1)^{\sum_{q=1}^{n-1} |x_q| + |f| + n} f(x_1, \dots, x_{n-1}) x_n.
\end{align*}

Given two $A$-bimodules $B_1$ and $B_2$ we have a cup product map
\[\CC^\bullet(A, B_1)\otimes \CC^\bullet(A, B_2)\rightarrow \CC^\bullet(A, B_1\otimes B_2)\]
given by
\[(f_1\smile f_2)(x_1,\dots, x_n) = \sum_{i=0}^n (-1)^{|f_2|(\sum_{q=1}^i |x_q| + i)}f_1(x_1,\dots, x_i) \otimes f_2(x_{i+1}, \dots, x_n).\]

A relation between Hochschild and Chevalley--Eilenberg cohomology is given by the following construction. Let $V$ be a $\U\g$-bimodule. Then $V^{ad}$ is a $\g$-representation with the action given by
\[x.v:= xv - (-1)^{|x||v|}vx,\quad x\in\g,\quad v\in V.\]

Consider $f\in\Hom((\U\g)^{\otimes n}, V)[-n]\subset \CC^\bullet(\U\g, V)$. We get an element $\tilde{f}\in \C^\bullet(\g, V)$ by the following formula:
\[\tilde{f}(x_1,\dots, x_n) = (-1)^{\sum_{q=1}^{n-1}(n-q)|x_q|} \sum_{\sigma\in S_n} (-1)^{\epsilon} f(x_{\sigma(1)}, \dots, x_{\sigma(n)}),\]
where $\epsilon$ is given by the Koszul sign rule with $x_i$ in degree $|x_i|+1$.

The following theorem can be found in \cite[Theorem 2.5]{CR}.
\begin{prop}
Let $A=\U\g$ and $V$ be a $\U\g$-bimodule. Then the morphism
\[\CC^\bullet(\U\g, V)\rightarrow \C^\bullet(\g, V)\]
we have defined is a quasi-isomorphism. Moreover, it is compatible with cup products.
\label{prop:CEvsHH}
\end{prop}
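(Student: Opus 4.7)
The plan is to realize both complexes as computations of $\mathrm{RHom}_{(\U\g)^e}(\U\g, V)$ via two different bimodule resolutions of $\U\g$, and to identify the antisymmetrization map with a chain map between these resolutions. By construction $\CC^\bullet(\U\g, V) = \Hom_{(\U\g)^e}(B_\bullet, V)$ for $B_\bullet = \U\g \otimes \T_\bullet(\U\g[1]) \otimes \U\g$ the two-sided bar resolution. For the Chevalley--Eilenberg side I would introduce the Koszul resolution
\[ P_\bullet \;=\; \U\g \otimes \Sym(\g[1]) \otimes \U\g \;\longrightarrow\; \U\g, \]
whose differential couples the Chevalley--Eilenberg differential on $\Sym(\g[1])$ with the two outer multiplication actions by $\g$. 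The Poincar\'{e}--Birkhoff--Witt theorem shows this is a resolution (a filtration by PBW degree identifies the associated graded with the classical Koszul complex of $\Sym\g$), and the freeness of the outer factors makes $P_\bullet$ $K$-projective over $(\U\g)^e$. Applying $\Hom_{(\U\g)^e}(P_\bullet, V)$ yields $\Hom_k(\Sym(\g[1]), V)$ whose induced differential recombines the two outer actions into the adjoint action $x.v = xv - (-1)^{|x||v|}vx$, recovering precisely $\C^\bullet(\g, V)$ with the differential \eqref{eq:CEdifferential}.

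Next I would define the comparison $\iota \colon P_\bullet \to B_\bullet$ as the $\U\g$-bilinear extension of antisymmetrization
\[ \Sym(\g[1]) \hookrightarrow \T_\bullet(\g[1]) \subset \T_\bullet(\U\g[1]), \qquad x_1\wedge\cdots\wedge x_n \longmapsto \sum_{\sigma\in S_n}(-1)^{\epsilon}\,[x_{\sigma(1)}|\cdots|x_{\sigma(n)}]. \]
The key verification is that $\iota$ is a chain map: the ``multiply adjacent arguments'' part of the bar differential, sending $[x_i|x_{i+1}]$ to $[x_i x_{i+1}]$, collapses on antisymmetrized tensors in $\g$ into the bracket $[x_i,x_j]\in\g$, matching the Lie-bracket part of the Koszul differential, while the outer face maps match the outer $\U\g$-multiplications directly. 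Since both $P_\bullet$ and $B_\bullet$ are $K$-projective resolutions of $\U\g$ over $(\U\g)^e$, any chain map lifting $\id_{\U\g}$ is a homotopy equivalence of complexes of projectives, so $\Hom_{(\U\g)^e}(\iota, V)$ is a quasi-isomorphism for arbitrary $V$. Unwinding the d\'{e}calage isomorphism between $\Hom((\U\g)^{\otimes n}, V)[-n]$ and $\Hom((\U\g[1])^{\otimes n}, V)$, and accounting for the prefactor $(-1)^{\sum_q (n-q)|x_q|}$, this induced map is exactly the antisymmetrization formula of the statement.

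For cup-product compatibility, both cup products come from dg coalgebra structures on the respective resolutions: $B_\bullet$ carries the Alexander--Whitney deconcatenation coproduct and $P_\bullet$ inherits the symmetric-coalgebra coproduct from $\Sym(\g[1])$. It is classical and a direct check that antisymmetrization is a morphism of dg coalgebras $\Sym(V[1]) \to \T_\bullet(V[1])$, so $\iota$ becomes a map of dg coalgebras of bimodule resolutions, and cup-product compatibility follows formally by naturality. The main obstacle throughout is the sign-bookkeeping: the Koszul sign rule combined with the shift $[1]$, the d\'{e}calage prefactors, and the possibly nontrivial grading on $V$ make the chain map and coalgebra map verifications tedious, though each individual step is routine.
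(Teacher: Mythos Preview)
Your argument is correct and is precisely the classical one: both complexes compute $\mathrm{RHom}_{(\U\g)^e}(\U\g, V)$ via the bar and Koszul resolutions respectively, PBW gives exactness of the latter, and antisymmetrization is a map of dg coalgebra resolutions, yielding both the quasi-isomorphism and cup-product compatibility. The paper does not supply its own proof of this proposition but simply cites \cite[Theorem 2.5]{CR}, where exactly this argument is carried out; so your proposal is not merely compatible with the paper's approach but is essentially the content of the reference it invokes.
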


\subsection{Hochschild cohomology and braces}
Gerstenhaber and Voronov \cite{GV1} observed that the Hochschild cochain complex $\CC^\bullet(A, A)$ is a brace algebra which was the motivating example. We define the brace operations as follows:
\begin{align}
&x\{x_1,\dots,x_n\}(a_1,\dots,a_m) \nonumber \\
&\qquad= \sum (-1)^{\epsilon} x(a_1,\dots,a_{i_1},x_1(a_{i_1+1},\dots,a_{i_1+l_1}),\dots,x_n(a_{i_n+1},\dots,a_{i_n+l_n}),\dots,a_m),
\label{eq:HHbrace}
\end{align}
where the sign is determined by the following rule: $x_i$ moving past $a_j$ produces the sign $(|x_i|+1)(|a_j|+1)$.

A multiplication on $A$ determines a degree 2 element $m$ of $\CC^\bullet(A, A)$ via \[m(x, y) = (-1)^{|x|+1}xy.\] The differential on $\CC^\bullet(A, A)$ is the sum of the natural differential on $\oplus_n \Hom((A[1])^{\otimes n}, A)$ and the differential $m\{f\} + (-1)^{|f|} f\{m\}$. The cup product on $\CC^\bullet(A, A)$ is given by the formula $f_1\smile f_2 = (-1)^{|f_1|+1} m\{f_1, f_2\}$.

We will also need a variation of this example. Let $B$ be a dga and $\mu\colon A\rightarrow B$ a morphism. Using the brace operations as above, one can turn $\CC^\bullet(A, B)$ into a left brace $\CC^\bullet(A, A)$-module.

We can also use the Hochschild cochain complex to give an interpretation of brace modules similar to Definition \ref{defn:coisotropic}.

\begin{prop}
Let $A$ be a brace algebra and $M$ a left brace $A$-module with the module structure given by a morphism of algebras $f_0\colon A\rightarrow M$. Then we have a lift
\[
\xymatrix{
\T_\bullet(A[1]) \ar^{f_0}[r] \ar@{-->}_{f}[dr] & \T_\bullet(M[1]) \\
& \T_\bullet(\CC^\bullet(M, M)[1]), \ar[u]
}
\]
where $f$ is a morphism of dg bialgebras.
\end{prop}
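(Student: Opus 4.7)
The plan is to build $f$ via the universal property of the cofree dg coalgebra on the right: a graded coalgebra morphism
\[
\T_\bullet(A[1])\longrightarrow \T_\bullet(\CC^\bullet(M,M)[1])
\]
is the same as a degree-zero linear map $\phi\colon \T_\bullet(A[1])\to \CC^\bullet(M,M)[1]$ vanishing on $A^{\otimes 0}$. I define $\phi([x_1|\cdots|x_n])$ for $n\ge 1$ to be the Hochschild cochain on $M$ whose arity-$k$ value on $(m_1,\ldots,m_k)$ (for $k\ge 1$) is the expansion of $(m_1\cdots m_k)\{x_1,\ldots,x_n\}$ prescribed by the distributivity axiom of Definition \ref{defn:bracemodule}, namely a signed sum of products of terms $m_j\{x_{i_{j-1}+1},\ldots,x_{i_j}\}$ over weakly increasing indices $0=i_0\le i_1\le\cdots\le i_k=n$; the arity-$0$ component is set to $f_0(x_1)$ when $n=1$ and to zero when $n\ge 2$. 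Then $f$ is defined as the cofree coextension of $\phi$.

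Compatibility with the triangle of coalgebras is essentially tautological: the vertical map is induced on bar by the projection $\CC^\bullet(M,M)\twoheadrightarrow \CC^0(M,M)=M$, so at the level of cogenerators it extracts the arity-$0$ part of $\phi$, which by construction is $f_0$ on $A[1]$ and vanishes on higher tensor powers. This matches exactly the cogenerator datum of the bar coalgebra morphism induced by $f_0$. Commutation of $f$ with differentials reduces, after cogenerator projection, to the higher-homotopy axiom in Definition \ref{defn:bracemodule}: the bar differential on the $A$-side (including the concatenation-into-product terms) matches the Hochschild differential on the $\CC^\bullet(M,M)$-side, whose outer-commutator terms are supplied precisely by the algebra action $f_0(x_1)\cdot(-)$ and $(-)\cdot f_0(x_n)$ appearing on the left and right of the cochain.

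The main obstacle is multiplicativity of $f$: showing that $\phi$ intertwines the brace-twisted product on $\T_\bullet(A[1])$ of Proposition \ref{prop:bracekoszul} with the analogous product on $\T_\bullet(\CC^\bullet(M,M)[1])$ built from Hochschild brace insertions. Projecting to cogenerators, this becomes a combinatorial identity between (i) the cochain obtained by applying $\phi$ to the bar-product $[x_1|\cdots|x_n]\cdot[y_1|\cdots|y_m]$, itself a sum over placements and brace-eatings of the $x_i$ inside the $y_j$-string, and (ii) the Hochschild brace insertion of $\phi([x_1|\cdots|x_n])$ into $\phi([y_1|\cdots|y_m])$. The key input is the associativity axiom of the brace module, which re-expands any nested expression $(m_1\cdots m_k)\{x_1,\ldots,x_n\}\{y_1,\ldots,y_m\}$ exactly as a sum over insertions of the $x_i$ into the $y_j$-list, matching term by term the pattern produced by the bar product on the $A$-side after one also applies distributivity on both sides. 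Once this identity and the accompanying sign matching coming from the shift $[1]$ are extracted, the dg bialgebra morphism property of $f$ follows.
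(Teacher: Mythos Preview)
Your construction of $\phi$ is more elaborate than the paper's, and this is where the approaches diverge. The paper takes $\phi([x_1|\cdots|x_n])$ to be concentrated in Hochschild arity $\leq 1$: explicitly $f_{m,n}=0$ for $m>1$ and $f_{1,n}(m;x_1,\dots,x_n)=m\{x_1,\dots,x_n\}$ (with the arity-$0$ piece recovering $f_0$). You instead populate \emph{all} arities via the distributivity formula. These two choices yield genuinely different coalgebra maps $f$, since they have different cogenerator projections. The paper's choice is the canonical one and makes the verification finite: because $\phi$ lands in arity $\leq 1$, the checks reduce to a small list of identities rather than the infinite family your definition creates.

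The more serious issue is how you allocate the four axioms of Definition~\ref{defn:bracemodule}. The paper's bookkeeping is: \emph{compatibility} and \emph{associativity} together give multiplicativity of $f$, while \emph{higher homotopies} and \emph{distributivity} together give compatibility with the differentials. You instead absorb distributivity into the definition of $\phi$, invoke higher homotopies for the differential, and associativity for the product---but you never use the compatibility axiom $(x\cdot 1)\{y_1,\dots,y_n\}=x\{y_1,\dots,y_n\}\cdot 1$. That axiom is not redundant: it is exactly what ties the arity-$0$ part $f_0$ to the brace-module operations, and without it the arity-$0$ component of the multiplicativity check (relating $f_0(x\{y_1,\dots,y_m\}_A)$ to $f_0(y)\{x\}_M$) cannot close. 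So as written your sketch has a gap. If you insist on your definition of $\phi$, you will still need compatibility, and you will also have to verify cancellations in every Hochschild arity (the arity-$k$ check involves cup products of lower-arity pieces against the arity-increasing Hochschild differential, and these do not collapse to ``just'' the associativity axiom). The paper's route avoids all of this by keeping $\phi$ in arity $\leq 1$ from the start.
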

\begin{proof}
A morphism of coalgebras $f\colon \T_\bullet(A[1])\rightarrow \T_\bullet(\CC^\bullet(M, M)[1])$ is uniquely specified by the composite
$\T_\bullet(A[1])\rightarrow \T_\bullet(\CC^\bullet(M, M)[1])\rightarrow \CC^\bullet(M, M)[1]$
which consists of morphisms
\[f_{m,n}\colon M^{\otimes m}\otimes A^{\otimes n}\rightarrow M[1-n-m].\]

We define $f_{m,n} = 0$ for $m>1$. The operations $f_{1,n}$ are given by
\[f_{1,n}(m,x_1,\dots,x_n) = m\{x_1,\dots, x_n\}.\]

A straightforward computation shows that the first two axioms in Definition \ref{defn:bracemodule} are equivalent to the compatibility of $f$ with the multiplications and the last two axioms are equivalent to the compatibility of $f$ with the differentials.
\end{proof}

\begin{remark}
A triple $(A, M, f)$ of a brace algebra $A$, a dga $M$ and a morphism of brace algebras $f\colon A\rightarrow \CC^\bullet(M, M)$ is expected (see \cite[Section 2.5]{Kon}) to be the same as an algebra over chains on the two-dimensional Swiss-cheese operad. The corresponding statement in the topological setting has been proved in \cite{Th}. Partial progress has been made in \cite{DTT} where the authors show that the pair $(\CC^\bullet(M, M), M)$ is indeed an algebra over the Swiss-cheese operad.
\end{remark}

\subsection{Hamiltonian reduction}

Let $B$ be a dg algebra with a $\g$-action. We denote by
\[a\colon\g\rightarrow\Der(B)\]
the action morphism. Under deformation quantization the notion of a moment map for Poisson algebras (Definition \ref{def:momentmap}) is deformed as follows.

\begin{defn}
A $\g$-equivariant morphism $\mu\colon \g\rightarrow B$ is a \textit{quantum moment map} if the equation
\[[\mu(x), b] = a(x).b\]
is satisfied for all $x\in\g$ and $b\in B$.
\end{defn}

We refer to \cite{Et} for details on quantum moment maps.

\begin{remark}
As in the case of classical moment maps, one can replace $\g$-equivariance by the condition that $\mu$ extends to a morphism of dg algebras $\U\g\rightarrow B$.
\end{remark}

\begin{defn}
Suppose $B$ is a dga equipped with a $\g$-action and a quantum moment map $\mu\colon \U\g\rightarrow B$. Its 
\textit{quantum Hamiltonian reduction} is
\[B//\U\g= \CC^\bullet(\U\g, k)\otimes^{\bL}_{\CC^\bullet(\U\g, \U\g)} \CC^\bullet(\U\g, B).\]
\label{defn:qhamreduction}
\end{defn}

In this bar complex we use the left $\CC^\bullet(\U\g, \U\g)$-module structure on $\CC^\bullet(\U\g, B)$ coming from the moment map $\U\g\rightarrow B$ and the right $\CC^\bullet(\U\g, \U\g)$-module structure on $\CC^\bullet(\U\g, k)$ coming from the counit. We put a dga structure on $B//\U\g$ in Corollary \ref{cor:quantumBRSTtensor}.

There is a quantum version of the BRST complex introduced in \cite{KS}. As a complex, it has the following description. We will assume that the Lie algebra $\g$ is unimodular, i.e. the representation $\det(\g)$ is trivial.

Recall the Koszul complex $\Sym(\g[1])\otimes B$ that we have defined in Section \ref{sect:classicalhamreduction}. We are going to deform it to the Chevalley--Eilenberg differential as follows. Given
\[x_1\wedge \dots\wedge x_n\otimes b\in\Sym(\g[1])\otimes B\] we let
\begin{align}
\d(x_1\wedge \dots\wedge x_n\otimes b) &= \sum_{i=1}^n (-1)^{(|x_i|+1)(\sum_{q=1}^{i-1} |x_q| + i - 1)} \d x_i\wedge x_1\wedge \dots\wedge \hat{x}_i\wedge \dots\wedge x_n\otimes b \nonumber \\
&- \sum_{i=1}^n (-1)^{|x_i|\sum_{q=i+1}^n(|x_q|+1) + \sum_{q=1}^{i-1}(|x_q|+1)+|x_i|} x_1\wedge \dots\wedge \hat{x}_i\wedge \dots\wedge x_n\otimes \mu(x_i)b \nonumber \\
&+ (-1)^{\sum_{q=1}^n |x_q| + n} x_1\wedge \dots\wedge x_n \otimes \d b \nonumber \\
&+ \sum_{i<j} (-1)^{(|x_i|+1)(\sum_{q=1}^{i-1}|x_q|+i) + (|x_j|+1)(\sum_{q=1,q\neq i}^{j-1}|x_q|+j-1)} \times \nonumber \\
&\qquad\qquad \times [x_i, x_j]\wedge x_1\wedge \dots, \widehat{x}_i,\dots,\widehat{x}_j,\dots,x_n\otimes b.
\label{eq:quantumkoszul}
\end{align}

The quantum BRST complex is then
\[\C^\bullet(\g, \Sym(\g[1])\otimes B).\]

We refer the reader to \cite[Section 6]{KS} for a detailed description of the quantum BRST complex together with a dga structure.

\subsection{Hamiltonian reduction as an intersection}

Let $B$ be a dga with a Hamiltonian action of $\g$. Recall that $\CC^\bullet(\U\g, B)$ is then a left brace module over $\CC^\bullet(\U\g, \U\g)$. Similarly, $\CC^\bullet(\U\g, k)$ is a left brace module using the counit map $\U\g\rightarrow k$ and hence $\CC^\bullet(\U\g, k)^{\op}$ is a right brace module. Using Theorem \ref{thm:quantumintersection} we therefore have a natural multiplication on the tensor product of $\CC^\bullet(\U\g, k)$ and $\CC^\bullet(\U\g, B)$.

\begin{cor}
The quantum Hamiltonian reduction
\[B//\U\g= \CC^\bullet(\U\g, k)\otimes^{\bL}_{\CC^\bullet(\U\g, \U\g)} \CC^\bullet(\U\g, B)\]
carries a natural dga structure. Moreover, it is quasi-isomorphic to the quantum BRST complex.
\label{cor:quantumBRSTtensor}
\end{cor}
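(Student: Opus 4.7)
The dga structure on $B//\U\g$ is immediate from Theorem~\ref{thm:quantumintersection}. Taking $A=\CC^\bullet(\U\g,\U\g)$ as the brace algebra, the left brace module $M=\CC^\bullet(\U\g,B)$ obtained from the quantum moment map $\U\g\to B$ via the brace formula \eqref{eq:HHbrace}, and the right brace module $N=\CC^\bullet(\U\g,k)$ obtained the same way from the counit $\U\g\to k$ by passing to opposites, the theorem promotes the two-sided bar complex $N\otimes^{\bL}_A M$ to a dga.

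For the comparison with the quantum BRST complex, my plan is to mimic the proof of Corollary~\ref{cor:BRSTcoisotropic} in the Hochschild setting, producing a zig-zag
\[
\CC^\bullet(\U\g,k)\otimes^{\bL}_{\CC^\bullet(\U\g,\U\g)}\CC^\bullet(\U\g,B)
\xrightarrow{\sim} \CC^\bullet(\U\g,\, k\otimes^{\bL}_{\U\g} B)
\xleftarrow{\sim} \CC^\bullet(\U\g,\, \Sym(\g[1])\otimes B)
\xrightarrow{\sim} \C^\bullet(\g,\,\Sym(\g[1])\otimes B).
\]
The first arrow is the quantum analogue of the ``multiplication collapse'' from the classical proof: view the two-sided bar complex as the realization of a bisimplicial object $V_\bullet\otimes W^1_\bullet$, with $V_n=k\otimes(\U\g)^{\otimes n}\otimes B$ computing $k\otimes^{\bL}_{\U\g}B$ and $W^1_\bullet$ coming from the outer Hochschild simplicial resolution of $\CC^\bullet(\U\g,k)$ over itself; the collapse $W^1_\bullet\to W^2_\bullet$ to a constant simplicial object is a levelwise quasi-isomorphism exactly as in the classical case. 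The second arrow comes from the Chevalley--Eilenberg--Koszul resolution of $k$ as a $\U\g$-module: the free left $\U\g$-module complex $\U\g\otimes\Sym(\g[1])$ with the CE differential resolves $k$, so tensoring on the right with $B$ over $\U\g$ via $\mu$ yields the complex $(\Sym(\g[1])\otimes B,\,\d^{\mathrm{qK}})$ with differential~\eqref{eq:quantumkoszul} as a $\U\g$-bimodule model of $k\otimes^{\bL}_{\U\g}B$. The third arrow is the cup-product-compatible quasi-isomorphism of Proposition~\ref{prop:CEvsHH} applied to this $\U\g$-bimodule.

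The bookkeeping for the first step, handling the interaction between the outer simplicial bar complex and the inner $\U\g$-resolution, reuses the simplicial argument from Corollary~\ref{cor:BRSTcoisotropic} essentially verbatim. The main obstacle I anticipate is in the second arrow: one has to set up the $\U\g$-bimodule structure on $\Sym(\g[1])\otimes B$ carefully so that the four terms of \eqref{eq:quantumkoszul} (the internal differentials on $\g$ and on $B$, the $\mu$-twist $\mu(x_i)b$, and the bracket term $[x_i,x_j]$) assemble into exactly the differential induced from $\U\g\otimes\Sym(\g[1])\otimes_{\U\g}B$ under the PBW symmetrization. Once the signs and bimodule structure are correctly aligned, the PBW theorem guarantees this is a quasi-isomorphism of $\U\g$-bimodules, and Proposition~\ref{prop:CEvsHH} then delivers the final arrow, closing the zig-zag.
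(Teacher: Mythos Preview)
Your proposal is correct and follows essentially the same route as the paper: the dga structure comes from Theorem~\ref{thm:quantumintersection}, and the zig-zag passes through $\CC^\bullet(\U\g,\,k\otimes^{\bL}_{\U\g}B)$ and $\CC^\bullet(\U\g,\,\Sym(\g[1])\otimes B)$ using the cup-product collapse (argued as in Corollary~\ref{cor:BRSTcoisotropic}), the inclusion of the Chevalley--Eilenberg chain complex into the bar complex, and the restriction quasi-isomorphism of Proposition~\ref{prop:CEvsHH}. The paper phrases the middle arrow simply as ``including the Chevalley--Eilenberg chain complex into the bar complex,'' which is exactly your PBW/Koszul resolution of $k$ as a $\U\g$-module tensored with $B$.
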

\begin{proof}
The zig-zag of quasi-isomorphisms mentioned in the statement of the theorem is as follows:
\[
\xymatrix{
\CC^\bullet(\U\g, k)\otimes^{\bL}_{\CC^\bullet(\U\g, \U\g)} \CC^\bullet(\U\g, B) \ar[d] \\
\CC^\bullet(\U\g, k\otimes^{\bL}_{\U\g}B) \\
\CC^\bullet(\U\g, \Sym(\g[1])\otimes B) \ar[u] \ar[d] \\
\C^\bullet(\g, \Sym(\g[1])\otimes B).
}
\]

\begin{itemize}
\item The morphism
\[\CC^\bullet(\U\g, k)\otimes^{\bL}_{\CC^\bullet(\U\g, \U\g)} \CC^\bullet(\U\g, B)\rightarrow \CC^\bullet(\U\g, k\otimes^{\bL}_{\U\g}B)\]
is given by the cup product. The fact that it is a quasi-isomorphism is proved as in Corollary \ref{cor:BRSTcoisotropic}.

\item The morphism
\[\CC^\bullet(\U\g, \Sym(\g[1])\otimes B)\hookrightarrow \CC^\bullet(\U\g, k\otimes^{\bL}_{\U\g}B)\]
is given by including the Chevalley--Eilenberg chain complex into the bar complex.

\item The morphism
\[\CC^\bullet(\U\g, \Sym(\g[1])\otimes B)\rightarrow \C^\bullet(\g, \Sym(\g[1])\otimes B)\]
is the restriction morphism which is a quasi-isomorphism by Proposition \ref{prop:CEvsHH}.
\end{itemize}
\end{proof}

\begin{remark}
As for the classical BRST complex, we do not know if the quasi-isomorphism above is compatible with the multiplication.
\end{remark}

\subsection{\texorpdfstring{$\bE_n$}{En} Hamiltonian reduction}

\label{sect:Enreduction}

The interpretation of quantum Hamiltonian reduction as a tensor product of brace modules allows one to formulate an $\bE_n$ version of quantum Hamiltonian reduction. In this section we sketch what such a notion looks like in the $\infty$-categorical setting. We refer to \cite{Gin} for some basics of $\bE_n$-algebras that we will use.

Let $\bE_n$ be the chain operad of little $n$-cubes. For instance, the operad $\bE_1$ is quasi-isomorphic to the associative operad and $\bE_2$ is quasi-isomorphic to the brace operad. Given a morphism of $\bE_n$-algebras $f\colon A\rightarrow B$ one has the \textit{$\bE_n$-centralizer} $\Z(f)$ which is an $\bE_n$-algebra satisfying a certain universal property \cite[Definition 24]{Gin}. For $f=\id\colon A\rightarrow A$ we denote $\Z(\id)=\Z(A)$, the center of $A$, which is an associative algebra object in $\bE_n$-algebras, i.e. an $\bE_{n+1}$-algebra by Dunn--Lurie additivity \cite[Theorem 5.1.2.2]{HA}. Note that in the case of associative algebras (i.e. $n=1$), $\Z(A)$ coincides with the Hochschild complex and its zeroth cohomology is the center of $A$ in the usual sense.

One has a forgetful functor from $\bE_n$-algebras to Lie algebras which on the level of underlying complexes is $A\mapsto A[n-1]$. The left adjoint to this forgetful functor is called the universal enveloping $\bE_n$-algebra functor and is denoted by $\U_{\bE_n}$, see \cite[Section 7.5]{Gin}.

Let $B$ be an $\bE_n$-algebra with an action of the Lie algebra $\g$, i.e. we have a morphism of Lie algebras $a\colon\g\rightarrow \bT_B$ to the tangent complex of $B$.

\begin{defn}
A \textit{quantum moment map} for the $\g$-action on $B$ is a morphism of Lie algebras $\g\rightarrow B[n-1]$ fitting into the diagram
\[
\xymatrix{
B[n-1] \ar[r] & \bT_B \\
& \g \ar^{a}[u] \ar@{-->}[ul]
}
\]
of Lie algebras.
\end{defn}

By adjunction the morphism of Lie algebras $\g\rightarrow B[n-1]$ gives rise to a morphism of $\bE_n$-algebras $\mu\colon\U_{\bE_n}(\g)\rightarrow B$. By the defining property of centralizers we see that $\Z(\mu)$ is a left module over the $\bE_{n+1}$-algebra $\Z(\U_{\bE_n}(\g))$ in the $\infty$-category of $\bE_n$-algebras. Alternatively, we can view the pair $(\Z(\U_{\bE_n}(\g)), \Z(\mu))$ as an $\bE_n$-algebra in the $\infty$-category $\mathrm{LMod}$ of pairs of an associative algebra and a module.

\begin{remark}
Note that using \cite[Theorem 14]{Gin} one can identify $\Z(\mu)\cong \C^\bullet(\g, B)$.
\end{remark}

Let $\epsilon\colon \U_{\bE_n}\g\rightarrow k$ be the counit map. If $n>0$ one can choose an isomorphism \[\Z(\U_{\bE_n}\g)\cong\Z(\U_{\bE_n}\g)^{\op}\] making $\Z(\epsilon)$ into a right module over $\Z(\U_{\bE_n}\g)$. Thus, if we denote by $\mathrm{BiMod}$ the $\infty$-category of triples $(A, M, N)$ of an associative algebra $A$, a left $A$-module $M$ and a right $A$-module $N$, then we see that the triple $(\Z(\U_{\bE_n}\g), \Z(\mu), \Z(\epsilon))$ becomes an $\bE_n$-algebra in $\mathrm{BiMod}$. In particular, for any $\bE_n$-algebra $(A, M, N)$ in $\mathrm{BiMod}$, the bar construction $N\otimes_A M$ is still an $\bE_n$-algebra.

\begin{defn}
Let $B$ be an $\bE_n$-algebra with a $\g$-action and a moment map $\mu\colon \U_{\bE_n}(\g)\rightarrow B$. Its \textit{$\bE_n$ Hamiltonian reduction} is the $\bE_n$-algebra
\[B//\U_{\bE_n}(\g)=\Z(\epsilon)\otimes_{\Z(\U_{\bE_n}\g)} \Z(\mu).\]
\end{defn}

For instance, consider the case $n=1$. Then $\U_{\bE_1}(\g)$ coincides with the usual enveloping algebra. Indetifying centralizers with the Hochschild complex, we get the formula
\[B//\U(\g) = \CC^\bullet(\U\g, k)\otimes_{\CC^\bullet(\U\g, \U\g)} \CC^\bullet(\U\g, B)\]
recovering quantum Hamiltonian reduction given in Definition \ref{defn:qhamreduction}.

\section{Classical limits}

\label{sect:classicallimits}

In this section we relate some constructions in Section \ref{sect:Pnalgebras} to those in Section \ref{sect:bracealgebras}. Namely, we formulate precisely in which sense constructions in Section \ref{sect:bracealgebras} are quantizations. Along the way we also relate Baranovsky and Ginzburg's construction \cite{BG} of the Poisson structure on a coisotropic intersection to our formulas.

\subsection{Beilinson--Drinfeld algebras}

A precise sense in which associative algebras are quantizations of Poisson algebras is given by Beilinson--Drinfeld ($\BD$) algebras \cite[Section 2.4]{CG}. Let us recall the definition.

\begin{defn}
A \textit{$\BD_1$-algebra} is a dgla $A$ over $k\llbracket\hbar\rrbracket$ together with an associative $k\llbracket\hbar\rrbracket$-linear multiplication satisfying the relations
\begin{itemize}
\item $\hbar\{x, y\} = xy - (-1)^{|x||y|}yx$,

\item $\{x, yz\} = \{x, y\}z + (-1)^{|x||y|} y\{x, z\}$.
\end{itemize}
\end{defn}

To understand this definition, recall that dg algebras are naturally Lie algebras with the bracket given by the commutator. The notion of a $\BD_1$-algebra then captures the fact that the Lie bracket vanishes to the first order at $\hbar=0$. In the classical limit we have an isomorphism of operads
\[\BD_1/\hbar\cong \bP_1\]
while in the quantum case $\hbar\neq 0$ we have
\[\BD_1[\hbar^{-1}]\cong \Ass\otimes_k k\llpar\hbar\rrpar\]
since the bracket is then uniquely determined from the multiplication. In other words, the operad $\BD_1$ interpolates between the Poisson operad $\bP_1$ and the associative operad $\Ass$.

\begin{remark}
One can show that $\BD_1(n)$ is free as a $k\llbracket\hbar\rrbracket$-module.
\end{remark}

Let us also mention that there is a canonical isomorphism of operads
\[\bP_1\otimes k[\hbar]/\hbar^2\stackrel{\sim}\rightarrow \BD_1/\hbar^2\]
given by sending the multiplication to $\frac{ab+(-1)^{|a||b|}ba}{2}$.

Given a $\BD_1$-algebra $A$, we let $A^{\op}$ be the opposite algebra with the operations
\begin{align*}
a\cdot^{\op} b &= (-1)^{|a||b|} b\cdot a \\
\{a, b\}^{\op} &= -\{a, b\}.
\end{align*}

There are also lower-dimensional and higher-dimensional versions of the $\BD_n$ operad.

\begin{defn}
A \textit{$\BD_0$-algebra} is a complex $A$ over $k\llbracket\hbar\rrbracket$ together with a degree $1$ Lie bracket and a unital commutative multiplication satisfying the relations
\begin{itemize}
\item $\d(ab) = \d(a)b + (-1)^{|a|} a \d(b) + \hbar\{a, b\}$,

\item $\{x, yz\} = \{x, y\}z + (-1)^{|y||z|}\{x, z\}y$.
\end{itemize}
\end{defn}

In the classical limit we have an isomorphism
\[\BD_0/\hbar\cong \bP_0\]
since then the multiplication is compatible with the differential. In the quantum case $\hbar\neq 0$ we have
\[\BD_0[\hbar^{-1}]\cong \widehat{\bE}_0\otimes k\llpar\hbar\rrpar,\]
where the operad $\widehat{\bE}_0$ is contractible, i.e. quasi-isomorphic to the operad $\bE_0$ controlling complexes with a distinguished vector.

Let $\coBD_1$ be the cooperad obtained as the $k\llbracket\hbar\rrbracket$-linear dual to the operad $\BD_1$. It has a natural Hopf structure, so following Calaque and Willwacher \cite{CW} we can consider its brace construction
\[\BD_2 = \Br_{\coBD_1}.\]

By construction we have that
\[\BD_2[\hbar^{-1}] = \Br_{\coBD_1[\hbar^{-1}]}\cong \Br_{\coAss}\otimes k\llpar\hbar\rrpar,\]
where $\Br_{\coAss}$ is a dg operad quasi-isomorphic to the operad $\Br$ controlling brace algebras, but where the product is merely $A_\infty$.

Moreover,
\[\BD_2/\hbar \cong \Br_{\coP_1}\cong \bP_2,\]
where the last quasi-isomorphism is given by \cite[Theorem 4]{CW} (we remind the reader that $e_n\cong \bP_n$ for $n\geq 2$).

\subsection{Modules}

Let us now describe modules over $\BD_1$-algebras.

Recall that a coisotropic morphism $A\rightarrow B$ for $A$ a $\bP_1$-algebra is the data of a $\bP_0$-algebra on $B$ and a morphism of $\bP_1$-algebras
\[A\rightarrow \Z(B)\cong \widehat{\Sym}(\T_B),\]
where for simplicity we have assumed that $\Omega^1_B$ is dualizable as a dg module over $B$.

For $B$ a commutative graded algebra we denote by $\widehat{\D}_{\hbar}(B)$ the completed algebra of $\hbar$-differential operators. That is, it is an algebra over $k\llbracket\hbar\rrbracket$ generated by elements of $B$ and $\T_B$ with the relations
\begin{align*}
vw - (-1)^{|v||w|}wv &= \hbar[v, w],\quad v,w\in \T_B \\
vb - (-1)^{|v||b|}bv &= \hbar v.b,\quad v\in \T_B,\ w\in B
\end{align*}
completed with respect to the increasing filtration given by the order of differential operators.

If $B$ is a $\BD_0$-algebra, the data of the differential on $B$ determines a Maurer--Cartan element in $\widehat{\D}_{\hbar}(B)$ and we denote by $\Z(B)$, the \textit{$\BD_0$-center} of $B$, the algebra $\widehat{\D}_{\hbar}(B)$ with the differential twisted by that Maurer--Cartan element. It is clear that $\Z(B)$ is a $\BD_1$-algebra.

More generally, if $B$ is a commutative graded algebra, the data of a Maurer--Cartan element in $\widehat{\D}_{\hbar}(B)$ will be called a $\widehat{\BD}_0$-algebra structure on $B$. Note that $\BD_0$-structures correspond to those Maurer--Cartan elements which have order at most 2.

\begin{remark}
Suppose $B_0$ is a cofibrant commutative dg algebra over $k$. We can trivially extend it to a $\BD_0$-algebra $B=B_0\otimes k\llbracket\hbar\rrbracket$ with the bracket defined to be zero. Then we expect that $\widehat{\D}_{\hbar}(B)$ coincides with the center of $B\in\Alg_{\BD_0}$ in the sense of \cite[Definition 5.3.1.6]{HA}. Note that given any $\BD_0$-algebra $B$, its $\BD_0$-center at $\hbar=0$ becomes $(\widehat{\Sym}(\T_{B_0}), [\pi_{B_0}, -])$, the $\bP_0$-center of $B_0=B/\hbar$.
\end{remark}

Let $A$ be another $\BD_1$-algebra.

\begin{defn}
A \emph{left $\BD_1$-module} over $A$ is a $\BD_0$-algebra $B$ together with a morphism $A\rightarrow \Z(B)$ of $\BD_1$-algebras.
\end{defn}

A \textit{right $\BD_1$-module} over $A$ is the same as a left $\BD_1$-module over $A^{\op}$.

It is clear that the definition at $\hbar=0$ reduces to the definition of a coisotropic morphism. Thus, one can talk about \textit{quantizations} of a given coisotropic morphism $A_0\rightarrow B_0$: these are $\BD_1$-algebras $A$ and $\BD_0$-algebras $B$ reducing to the given algebras $A_0,B_0$ at $\hbar=0$ together with a left $\BD_1$-module structure on $B$.

One can similarly define $\BD_2$-modules as follows. Given a complex $B$, we denote by $\coBD_1(B)$ the cofree conilpotent $\BD_1$-coalgebra on $B$. Given a $\BD_1$-algebra $B$, we define its \emph{center} to be the complex
\[\Z(B) = \Hom(\coBD_1(B), B)\]
twisted by the differential given by the $\BD_1$-structure on $B$. By the results of \cite{CW}, $\Z(B)$ is a $\BD_2=\Br_{\coBD_1}$-algebra, so we can give the following definition.

Let $A$ be a $\BD_2$-algebra.

\begin{defn}
A \emph{left $\BD_2$-module} over $A$ is a $\BD_1$-algebra $B$ together with a morphism $A\rightarrow \Z(B)$ of $\BD_2$-algebras.
\end{defn}

\begin{remark}
Just like for Poisson algebras, we expect that the $\BD_1$-center of a $\BD_1$-algebra $B\in\Alg_{\BD_1}$ satisfies the universal property of \cite[Definition 5.3.1.6]{HA}. See also Remark \ref{remark:centralizer}.
\end{remark}

\subsection{From \texorpdfstring{$\BD_1$}{BD1} to \texorpdfstring{$\BD_0$}{BD0}}

We are now going to prove a $\BD_1$-version of Theorem \ref{thm:coisotropicintersection}.

Let $A$ be a $\BD_1$-algebra. In particular, it is a dga and so we have a dg coalgebra $\T_\bullet(A[1])$. Introduce a commutative multiplication on $\T_\bullet(A[1])$ given by the shuffle product and the Lie bracket given by \eqref{eq:PnKoszulbracket}.

Since $A$ is not necessarily commutative, the differential is not compatible with the shuffle product. But its failure is exactly captured by the bracket.

\begin{prop}
Let $A$ be a $\BD_1$-algebra. The differential, multiplication and the bracket make $\T_\bullet(A[1])$ into a $\BD_0$-algebra compatibly with the coalgebra structure.
\end{prop}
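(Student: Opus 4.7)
The plan is to reduce as much as possible to the proof of Proposition \ref{prop:PnKoszul} and to isolate the one new identity, which captures the $\BD_1$-defect of commutativity of $A$.

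First I would note that the deconcatenation coproduct and its compatibility with the bar differential depend only on $A$ being a dga, and that the bialgebra compatibility between deconcatenation and the shuffle product is a purely combinatorial shuffle identity independent of commutativity. The verifications of symmetry, Jacobi, the biderivation property over the shuffle product, and compatibility of the bracket \eqref{eq:PnKoszulbracket} with the coproduct will all go through verbatim from Proposition \ref{prop:PnKoszul}: each is carried out by projecting to the cogenerator $A[1]$ and reduces to an identity in $A$ that holds as a $\BD_1$-axiom (Jacobi for $\{-,-\}$, its Leibniz rule over the multiplication, and its compatibility with the differential).

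The one new ingredient is the $\BD_0$-relation
\[\d(x\cdot y) = (\d x)\cdot y + (-1)^{|x|} x\cdot(\d y) + \hbar\{x, y\}\]
for $x, y\in \T_\bullet(A[1])$. Since $\d$ is a coderivation and the shuffle product is a coalgebra map, both sides are coderivations from $\T_\bullet(A[1])\otimes\T_\bullet(A[1])$ to $\T_\bullet(A[1])$, so I would verify the identity by projection to the cogenerator $A[1]$. A length count shows this projection is supported on inputs of total bar length at most two; the cases with one input of length zero are immediate. The essential case is $x=[a]$, $y=[b]$, where
\[[a]\cdot[b] = [a|b] + (-1)^{(|a|+1)(|b|+1)}[b|a].\]
The length-reducing piece of the bar differential projects this to $ab - (-1)^{|a||b|}ba$ up to an overall sign, while $(\d x)\cdot y$ and $x\cdot(\d y)$ contribute trivially in bar length one. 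By the defining relation of a $\BD_1$-algebra this equals $\hbar\{a, b\}$, which also matches the projection of $\hbar\{[a], [b]\}$ to $A[1]$ computed from \eqref{eq:PnKoszulbracket}.

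The hard part will be the sign bookkeeping in this last step and setting up the coderivation-uniqueness reduction precisely with the correct tensor-product coalgebra structure on the source. However, no new algebraic input beyond the single identity $xy - (-1)^{|x||y|}yx = \hbar\{x, y\}$ is required, and the proposition is essentially an $\hbar$-deformation of the $n=0$ case of Proposition \ref{prop:PnKoszul}.
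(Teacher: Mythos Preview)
The paper states this proposition without proof (it appears in Section \ref{sect:classicallimits}, which explicitly announces that it will ``sketch'' the $\BD_1$-version of Theorem \ref{thm:coisotropicintersection}), so there is no argument to compare against. Your plan is the natural one and is correct: the coalgebra compatibility of the shuffle product and of the bracket \eqref{eq:PnKoszulbracket}, together with symmetry, Jacobi, the Leibniz rule, and compatibility of the bracket with the bar differential, are proved exactly as in Proposition \ref{prop:PnKoszul}, since each of those checks reduces on cogenerators to an identity (Jacobi for $\{,\}$, the Leibniz rule $\{a,bc\}=\{a,b\}c+(-1)^{|a||b|}b\{a,c\}$, compatibility of $\{,\}$ with $\d$) that is part of the $\BD_1$ axioms and does not use commutativity of the product on $A$. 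The one genuinely new identity is the $\BD_0$ relation for the bar differential against the shuffle product, and your reduction to the case $x=[a]$, $y=[b]$ via the coderivation-over-$m$ argument is valid; the resulting identity in $A$ is precisely $ab-(-1)^{|a||b|}ba=\hbar\{a,b\}$.

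Two small points worth making explicit when you write this up. First, the ``coderivation'' reduction you invoke is not the standard one (a coderivation of a single coalgebra) but the twisted version used in the proof of Proposition \ref{prop:PnKoszul}: each term of $\d(xy)-(\d x)y-(-1)^{|x|}x(\d y)-\hbar\{x,y\}$ satisfies $\Delta\circ f=(f\otimes m+m\otimes f)\circ\Delta_{\T_\bullet(A[1])^{\otimes 2}}$ with $m$ the shuffle product, so the difference does too and is determined by its projection to $A[1]$. Second, the length-zero case uses that $[]$ is the shuffle unit, that $\d[\,]=0$, and that $\{[\,],-\}=0$, the last of which follows from the empty index range in \eqref{eq:PnKoszulbracket}; it is worth saying this rather than just ``immediate''.
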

\begin{proof}
To prove the claim we just need to show that the relation between the differential on $\T_\bullet(A[1])$ and the product is exactly the one that appears in the definition of $\BD_0$-algebras.

Due to the compatibility of the operations with the coproduct on $\T_\bullet(A[1])$, we just need to check the corresponding relation after projection to $A[1]$.

For $a,b\in A$ we have
\begin{align*}
\d([a]\cdot [b]) &= \d([a|b] + (-1)^{(|a|+1)(|b|+1)}[b|a]) \\
&= [\d a|b] + (-1)^{|a|+1} [a|\d b] + (-1)^{(|a|+1)(|b|+1)}[\d b|a] + (-1)^{|a|(|b|+1)}[b|\d a] \\
&+(-1)^{|a|+1}[ab] + (-1)^{|a|(|b|+1)}[ba].
\end{align*}

Similarly, we have
\[
[\d a]\cdot [b] + (-1)^{|a|+1}[a]\cdot [\d b] = [\d a|b] + (-1)^{|a|(|b|+1)}[b|\d a] + (-1)^{|a|+1}[a|\d b] + (-1)^{(|a|+1)(|b|+1)}[\d b|a].
\]

Their difference is given by
\begin{align*}
(-1)^{|a|+1}[ab] + (-1)^{|a|(|b|+1)}[ba] &= \hbar (-1)^{|a|+1}[\{a, b\}] \\
&= \hbar \{[a], [b]\}.
\end{align*}
\end{proof}

We can also add modules in the picture. Let $M$ be a left $\BD_1$-module. Then $\T_\bullet(A[1])\otimes M$ carries a differential and $L_\infty$ brackets given by equations \eqref{eq:barcomplexlk} and \eqref{eq:barcomplexl2}. Moreover, $\T_\bullet(A[1])\otimes M$ carries a natural multiplication.

\begin{prop}
Let $A$ be a $\BD_1$-algebra and $M$ a left $\BD_1$-module. Then $\T_\bullet(A[1])\otimes M$ carries a natural structure of a left $\widehat{\BD}_0$-comodule over $\T_\bullet(A[1])$.
\end{prop}
\begin{proof}
By construction the differential and the brackets on $\T_\bullet(A[1])\otimes M$ are compatible with the $\T_\bullet(A[1])$-comodule structure, so we just have to check that the projection of the differential on $M$ has symbol given by the brackets.

The differential lands in $M$ in the following two cases:
\begin{enumerate}
\item The symbol of $\d\colon M\rightarrow M$ is given by the Poisson bracket on $M$ since it is a $\BD_0$-algebra.

\item The symbol of the action map $A\otimes M\rightarrow M$ is given by the $\hbar=0$ limit of the action map $A\rightarrow \D_\hbar(M)$ which coincides with the Poisson brackets on $\T_\bullet(A[1])\otimes M$ given by formula \eqref{eq:barcomplexlk}.
\end{enumerate}
\end{proof}

Finally, suppose $M$ and $N$ are a left and right $\BD_1$-modules over $A$ respectively. Then on the two-sided bar complex
\[N\otimes \T_\bullet(A[1])\otimes M\]
we can introduce the usual bar differential and the shuffle product.

\begin{thm}
Let $A$ be a $\BD_1$-algebra, $M$ a left $\BD_1$-module and $N$ a right $\BD_1$-module over $A$. Then the two-sided bar complex $N\otimes \T_\bullet(A[1])\otimes M$ has a $\widehat{\BD}_0$-structure.
\end{thm}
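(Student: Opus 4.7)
The plan is to mirror the three-step structure used in the proof of Theorem \ref{thm:coisotropicintersection}, upgrading each $\widehat{\bP}_0$ assertion to a $\widehat{\BD}_0$ assertion. By the previous proposition, $\tilde{A}:=\T_\bullet(A[1])$ is already a $\BD_0$-algebra whose coproduct and shuffle product are compatible, so the first job is to produce the analogous one-sided structures on $\tilde{M}:=\T_\bullet(A[1])\otimes M$ and $\tilde{N}:=N\otimes \T_\bullet(A[1])$.

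For $\tilde{M}$, I would take the graded commutative algebra obtained from the shuffle product on the bar factors and the (commutative, up to the $\BD_0$-bracket) multiplication on $M$. The $\widehat{\BD}_0$-datum is encoded as a Maurer--Cartan element for the completed differential operator algebra $\widehat{\D}_\hbar(\tilde{M})$; I would assemble it from three pieces: the Maurer--Cartan element defining the $\BD_0$-structure on $\tilde{A}$, the Maurer--Cartan element defining the $\BD_0$-structure on $M$, and a cross term of arbitrary order obtained from the left $\BD_1$-module map $A\to \Z(M)=\widehat{\D}_\hbar(M)$, extended by shuffle-derivation into the bar variables exactly as formula \eqref{eq:barcomplexlk} extends the $f_k$ in the $\bP_n$-setting. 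The analogous recipe gives a right $\widehat{\BD}_0$-comodule structure on $\tilde{N}$; this is where one uses that opposite $\BD_1$-modules are obtained from $A^{\op}$ together with the involution \eqref{eq:cofreecop}, which now must also respect the bracket appearing in the $\BD_0$-structure.

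The verification that this data really is a $\widehat{\BD}_0$-algebra reduces, after using the cofree coalgebra property to project onto cogenerators, to three types of identities: the shuffle product is a graded commutative multiplication (already used in Proposition \ref{prop:PnKoszul}), the bracket satisfies Jacobi and Leibniz modulo $\hbar$ (the classical limit is covered by the proposition above and deforms coherently), and the key compatibility $\d(ab) = (\d a)b + (-1)^{|a|} a\,\d b + \hbar\{a,b\}$. The latter is exactly the place where the non-commutativity of $A$ shows up: shuffle is no longer strictly compatible with the bar differential, and the discrepancy must be absorbed by the bracket. For the mixed bar/module terms this is governed by the $\BD_1$-module axiom, namely that the map $A\to \Z(M)$ is a map of $\BD_1$-algebras into $\widehat{\D}_\hbar(M)$, so the failure of associativity along $M$ is compensated by the Schouten-type bracket coming from the module structure.

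Having established the two one-sided structures, I would conclude exactly as in the proof of Theorem \ref{thm:coisotropicintersection}: the coaction maps $\tilde{M}\to \tilde{A}\otimes \tilde{M}$ and $\tilde{N}\to \tilde{N}\otimes \tilde{A}$ are morphisms of $\widehat{\BD}_0$-algebras by construction, hence the cotensor product $\tilde{N}\otimes^{\tilde{A}}\tilde{M}$, being the equalizer of two $\widehat{\BD}_0$-morphisms, inherits a $\widehat{\BD}_0$-structure; the identification $\tilde{N}\otimes^{\tilde{A}}\tilde{M}\cong N\otimes \T_\bullet(A[1])\otimes M$ via coassociativity is the same as before and does not interact with $\hbar$. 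The main obstacle I expect is purely combinatorial bookkeeping of the Maurer--Cartan element on $\tilde{M}$: one has to check that the cross term (built from $A\to \Z(M)$ and the shuffle extension) squares to zero against the existing pieces, which is the $\hbar$-deformed analogue of \eqref{eq:coisotropic3}--\eqref{eq:coisotropic5} and involves tracking extra factors of $\hbar$ coming from the weight grading in the $\BD_1$- and $\BD_0$-axioms. Modulo this verification, which follows the same case analysis as the proposition at the end of Section \ref{sect:Pnalgebras}, the theorem follows.
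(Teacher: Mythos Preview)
The paper does not actually prove this theorem: Section \ref{sect:classicallimits} only ``sketches'' the $\BD_1$-version, states the proposition about $\T_\bullet(A[1])$ being a $\BD_0$-bialgebra without proof, introduces the shuffle product on the two-sided bar complex, and then states the theorem with no argument. Your plan---build one-sided $\widehat{\BD}_0$-comodule structures on $\tilde{M}$ and $\tilde{N}$ by deforming the construction preceding Theorem \ref{thm:coisotropicintersection}, then pass to the cotensor product---is precisely the route the paper's setup points to, and is the expected $\hbar$-deformation of the proof of Theorem \ref{thm:coisotropicintersection}.

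One comment on the cotensor step: for $\widehat{\bP}_n$-algebras the equalizer argument works because the $L_\infty$ operations are genuine multilinear maps and a morphism of $\widehat{\bP}_n$-algebras is an ordinary map intertwining them. A $\widehat{\BD}_0$-structure, by contrast, is encoded as a Maurer--Cartan element in $\widehat{\D}_\hbar(B)$, so you should say what a morphism of $\widehat{\BD}_0$-algebras is (a map of graded commutative algebras over $k\llbracket\hbar\rrbracket$ under which the Maurer--Cartan elements correspond via pushforward of differential operators) and check that the equalizer of two such maps again carries a Maurer--Cartan element. This is straightforward once stated, but since the paper never writes it down it is worth making explicit in your write-up.
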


At $\hbar=0$ this construction recovers the $\widehat{\bP}_0$-structure of Theorem \ref{thm:coisotropicintersection}.

\end{document}